\def\calA{{\mathcal{A}}}
\def\calB{{\mathcal{B}}}
\def\calC{{\mathcal{C}}}
\def\calD{{\mathcal{D}}}
\def\calF{{\mathcal{F}}}
\def\bbF{{\mathbb{F}}}
\def\bbN{{\mathbb{N}}}
\def\calO{{\mathcal{O}}}
\def\calT{{\mathcal{T}}}
\def\bbZ{{\mathbb{Z}}}
\def\id{{\operatorname{id}}}
\def\im{{\operatorname{im}}}
\def\Hom{{\operatorname{Hom}}}
\def\Ob{{\operatorname{Ob}}}
\def\Aut{{\operatorname{Aut}}}
\def\pr{{\operatorname{pr}}}
\def\girth{{\operatorname{girth}}}
\def\diam{{\operatorname{diam}}}
\def\Cay{{\operatorname{Cay}}}
\DeclareMathOperator*{\colim}{colim}
\def\nd{{\ensuremath{^\text{nd}}}}
\def\comp{{\,\circ\,}}            
\def\pt{\mathsf{pt}}              
\def\bd{\text{bound.}}            
\def\ls{{\gg\!}}                  
\def\underbrace#1{%
   \@ifnextchar_{\tikz@@underbrace{#1}}{\tikz@@underbrace{#1}_{}}}
\def\tikz@@underbrace#1_#2{%
   \tikz[baseline=(a.base)] {\node[inner sep=1] (a) {\(#1\)};
   \draw[line cap=round, line width = 0.75pt,decorate,decoration={brace,amplitude=5pt}]
     (a.south east) -- node[below,inner sep=4pt] {\(\scriptstyle #2\)} (a.south west);}}
\def\overbrace#1{%
   \@ifnextchar_{\tikz@@overbrace{#1}}{\tikz@@overbrace{#1}_{}}}
\def\tikz@@overbrace#1_#2{%
   \tikz[baseline=(a.base)] {\node[inner sep=1] (a) {\(#1\)};
   \draw[line cap=round, line width = 0.75pt,decorate,decoration={brace,amplitude=5pt}]
     (a.north west) -- node[above,inner sep=4pt] {\(\scriptstyle #2\)} (a.north east);}}
\newcommand{\myrightleftarrows}[1]{\mathrel{\substack{\xrightarrow{#1} \\[-.9ex] \xleftarrow{#1}}}}
\newcommand{\set}[2]{\left\{ #1 \,\middle|\, #2 \right\}}
\newcommand{\bigslant}[2]{{\left.\raisebox{.2em}{$#1$}\middle/\raisebox{-.2em}{$#2$}\right.}}
\def\cop{{\scalebox{1.0}[0.9]{$\amalg$}}}
\newcommand{\oset}[3][0ex]{%
  \mathrel{\mathop{#3}\limits^{
    \vbox to#1{\kern-4\ex@
    \hbox{$\scriptstyle#2$}\vss}}}}
\newcommand{\osett}[3][0ex]{%
	\mathrel{\mathop{#3}\limits^{
			\vbox to#1{\kern-6\ex@
				\hbox{$\scriptstyle#2$}\vss}}}}
\newcommand{\uset}[3][0ex]{%
  \mathrel{\mathop{#3}\limits^{
    \vbox to#1{\kern8\ex@
    \hbox{$\scriptstyle#2$}\vss}}}}
\def\VCyc{{\mathcal{VC}\textit{yc}}}
\def\Fin{{\mathcal{F}\textit{in}}}
\def\O{\calO}
\def\T{\calT}
\def\D{\calD}
\def\lf{\textit{lf}}
\author{Markus Zeggel}
\title{The Bounded Isomorphism Conjecture\\ for Box Spaces of Residually Finite Groups}
\begin{document}
	
	\maketitle
	
	\paragraph*{Abstract}
	In this article we study a coarse version of the $K$-theoretic Farrell--Jones conjecture we call \emph{coarse} or \emph{bounded isomorphism conjecture}.
	Using controlled category theory we are able to translate this conjecture for asymptotically faithful covers into a more familiar form.
	This allows us to prove the conjecture for box spaces of residually finite groups whose Farrell--Jones assembly map with coefficients is an isomorphism.
	
	\tableofcontents
	
	\section{Introduction}

There are two important isomorphism conjectures in the fields of operator algebras and geometric topology: the Baum--Connes conjecture and the Farrell--Jones conjecture. In the Davis--Lück picture, these conjectures predict the following: For any group $G$, the assembly maps 
$$H^G_*(E_\text{Fin} G; \mathbb K^\text{top}) \rightarrow K_*(C^*_r(G))
\quad \text{ and } \quad
H^G_*(E_\text{VCyc} G; \mathbb K_R^\text{alg}) \rightarrow K_*(R[G])$$
are isomorphisms. These formulae help compute the topological $K$-theory of the reduced group $C^*\!$-algebra $C^*_r(G)$ and the algebraic $K$-theory of the group ring $R[G]$, respectively.

In order to prove or disprove such conjectures, it can be helpful to compare them to other versions of the conjectures.
For example, there is a coarse version of the Baum-Connes conjecture.
Using the connection between the ``coarse'' and the ``usual'' conjecture, one can show that the Baum--Connes assembly map fails to be surjective for Gromov monster groups, i.e.\ groups that coarsely contain infinite expanders, cf.\ \cite[Theorem~8.2]{Willett.2012a}.
Such methods of constructing counter-examples are based on the work of Higson, Lafforgue and Skandalis, see \cite{Higson.2002}.

It is natural to ask whether the coarse version of the Farrell-Jones conjecture behaves differently.
One way of constructing infinite expanders is to take the Cayley graphs of the groups $SL_n(\bbF_p)$ for an increasing sequence of prime numbers $p$.
All these groups are finite quotients of the residually finite group $SL_n(\bbZ)$.
Instead of directly studying expanders, as in \cite{Willett.2012a}, we here study residually finite groups.

\subsection{Result}

The reduced and the maximal version of the Baum--Connes conjecture behave very differently.
As we already indicated, box spaces of certain residually finite groups yield counter-examples for the reduced Baum--Connes conjecture.
In contrast, Oyono-Oyono and Yu prove the maximal coarse Baum--Connes conjecture for box spaces of residually finite groups that satisfy the maximal Baum--Connes conjecture with coefficients, see  \cite[Theorem 4.17]{Oyono-Oyono.2009}.
In this article we prove the analogous result for the coarse algebraic assembly map.

\begin{theorem*}
	Let $X$ be a box space of a residually finite group $G$, i.e.\ $X = \bigsqcup_{n \in \bbN} G/H_n$, where $H_n$ is a sequence of finite index normal groups in $G$ with $\bigcap_{n \in \bbN} H_n = \{1\}$.
	Then the coarse assembly maps combine to an isomorphism
	$$\colim_{d \geq 0} H_*^\lf\big(P_d(X); \calA\big) \rightarrow K_*\big(\calC^\lf( X; \calA)\big)$$
	if and only if $G$ satisfies the Farrell--Jones conjecture with coefficients $\calA_G$. Here, $\calA_G$ is an additive category constructed from $\calA$ and the sequence $H_n$, see Theorem \ref{thm:reduction_for_residually_finite_groups} for details.
\end{theorem*}

\subsection{Outline}

In Section \ref{sec:basics} we recall basic concepts for additive categories and introduce so-called $\varepsilon$-filtered categories.
Using these notions, we define different kinds of assembly maps and compare them to each other in Section \ref{sec:assembly_maps}.
In Section \ref{sec:the_conjecture} we give a definition of the bounded isomorphism conjecture and translate it into another form.
This translation already makes use of the properties of residually finite groups and works more generally for so-called asymptotically faithful covers.
Finally, we establish the relation between the coarse and the usual assembly map for residually finite groups as stated in the theorem above.
We conclude this article with an overview of some counterexamples for the coarse Baum--Connes conjecture that are proexamples for the coarse Farrell--Jones conjecture.

\subsubsection*{Acknowledgements}

This article is based of my PhD thesis. I would like to thank my supervisor Arthur Bartels and my colleague Robin Loose for many helpful discussions.

The project was funded by the Deutsche Forschungsgemeinschaft (DFG, German Research Foundation) -- Project-ID 427320536 -- SFB 1442, as well as under Germany’s Excellence Strategy EXC 2044 390685587, Mathematics Münster: Dynamics--Geometry--Structure.
	\section{Basics}
\label{sec:basics}

\subsection{Additive Categories}

\begin{definition}
	Let $\calC$ be an additive category and $\calA \subseteq \calC$ a full additive subcategory. The \emph{quotient category} $\bigslant\calC\calA$ is the category that has the same objects as $\calC$, but
	$$\Hom_{\calC/\calA}(X,Y) := \bigslant{\Hom_\calC(X,Y)}{\sim}$$
	as morphism groups, where $f \sim g$ if and only if $f-g$ factors through an object in $\calA$, i.e.\ if there exists a commutative diagram in $\calC$ \vspace{-1em}
	\begin{center}
		\begin{tikzcd}[row sep=1em, column sep=1.5em]
			X \arrow[rr, "f-g"] \arrow[dr] && Y \\ & A \arrow[ur]
		\end{tikzcd}
	\end{center}
	with $A \in \calA$. This defines an additive category via $[f] \comp [g] := [f \comp g]$ and $[f] + [g] := [f + g]$. 
	Also, the canonical functor $\pr: \calC \rightarrow \bigslant{\calC}{\calA}$ is compatible with direct sums (cf. \cite{Ramras.2018}).
\end{definition}

\begin{definition}
	\label{def:karoubi}
	Let $\calC$ be an additive category and $\calA \subseteq \calC$ a full additive subcategory. 
	The pair $(\calC, \calA)$ is called a \emph{Karoubi filtration} if, for all $A \in \calA$, $X \in \calC$ and $(f: A \rightarrow X)$, $(g: X \rightarrow A) \in \Hom_\calC$, there are a direct sum decomposition $A' \underset{\adjustbox{trim=0ex 0ex 0ex 1.25ex}{{\scriptsize $\pr$}}}{\oset{\adjustbox{trim=0ex 1.75ex}{{\scriptsize $\iota$}}}{\myrightleftarrows{~~~}}} X \myrightleftarrows{~~~} Y$, with $A' \in \calA$, $Y \in \calC$, and morphisms $f',g'$
	such that \vspace{-1em}
	\begin{center}
		\begin{tikzcd}[row sep=1.5em, column sep=1.5em]
			A \arrow[rr, "f"] \arrow[dr, swap, "f'"] && X \\ & A' \arrow[ur, swap, "\iota"]
		\end{tikzcd} and 
		\begin{tikzcd}[row sep=1.5em, column sep=1.5em]
			X \arrow[rr, "g"] \arrow[dr, swap, "\pr"] && A \\ & A' \arrow[ur, swap, "g'"]
		\end{tikzcd}
	\end{center}
	are commutative diagrams in $\calC$. 
	(Note that it is the same $A'$ in both diagrams.)
\end{definition}

\begin{remark}
	The original definition of a Karoubi filtration is slightly different.
	However, in \cite[Lemma 5.6 and Remark 5.7]{Kasprowski.2014} the definition we use is shown to be equivalent to the original definition.
\end{remark}

\subsection{$\varepsilon$-Filtered Categories}

With the following definition we are able to apply concepts of basic analysis to category theory.

\begin{definition}
	We call an additive category $\calA$ an \emph{$\varepsilon$-filtered category} if, for every $\varepsilon > 0$ and pair of objects $X,Y \in \calA$, there is a collection of (additive) subgroups $\Hom_\varepsilon(X, Y) \subseteq \Hom(X,Y)$, called \emph{the subgroup of $\varepsilon$-controlled morphisms}, such that
	\begin{enumerate}
		\item $\varepsilon \leq \varepsilon' \Rightarrow \Hom_\varepsilon(X,Y) \subseteq \Hom_{\varepsilon'}(X,Y)$,
		\item $\bigcup_{\varepsilon > 0} \Hom_\varepsilon(X,Y) = \Hom(X,Y)$,
		\item $\id_X \in \Hom_\varepsilon(X,X)$ for all $\varepsilon > 0$, and
		\item $f \in \Hom_\varepsilon(X,Y), g \in \Hom_{\varepsilon'}(Y,Z) \Rightarrow g \comp f \in \Hom_{\varepsilon + \varepsilon'}(X, Z)$.
	\end{enumerate}
\end{definition}

The direct sum of $\varepsilon$-filtered categories is again $\varepsilon$-filtered.
However, there is no canonical way of defining a useful $\varepsilon$-filtration on the direct product of infinitely many $\varepsilon$-filtered categories.
The following is a remedy for that.

\begin{definition}
	Let $\calA_i$, $i \in I$, be a family of $\varepsilon$-filtered categories. The \emph{bounded product} $\prod^\bd_{i \in I} \calA_i$ is the subcategory of $\prod_{i \in I} \calA_i$ that has all objects, but only includes those morphisms $(f_i)_i$ that are uniformly $\varepsilon$-controlled for an $\varepsilon > 0$, i.e.\ $f_i \in \Hom_\varepsilon$ for all $i \in I$.
\end{definition}

The following definition describes how we can create several different $\varepsilon$-filtered categories from a given metric space.

\begin{definition}
\label{def:controlled_categories}
Let $G$ be a group and $(X,d)$ a locally compact metric space with isometric $G$-action.
Let $\calA$ be an additive category with $G$-action.
The category $\O_G^\lf(X;\calA)$ is defined as follows.
\begin{description}
\item[Objects] are triples $(S, \pi: S \rightarrow X \times \bbN, M: S \rightarrow \Ob(\calA))$ where $S$ is a free $G$-set such that $\pi$ and $M$ are $G$-equivariant ($G$ acts trivially on $\bbN$) and 
\begin{enumerate}
	\item \label{c_locally_compact} for all cocompact $G$-subsets $K \subseteq X \times \bbN$ the preimage $\pi^{-1}(K)$ is cofinite, i.e.\ has finite quotient.
\end{enumerate}
\item[Morphisms] $\varphi: (S, \pi, M) \rightarrow (S', \pi', M')$ are given by matrices $(\varphi_{s'}^s)_{s \in S, s' \in S'}$ where $\varphi_{s'}^s: M(s) \rightarrow M'(s')$ is a morphism in $\calA$ such that $g.\varphi_{s'}^s = \varphi_{gs'}^{gs}$ and where
\begin{enumerate}
	\setcounter{enumi}{1}
\item \label{c_rows_columns_finite} all rows and columns are finite,
\item \label{c_bounded_n} $\exists \alpha > 0 \,\forall s, s' : \varphi_{s'}^s \neq 0 \Rightarrow |\pi_\bbN(s) - \pi'_\bbN(s')| < \alpha $,
\item \label{c_bounded_x} $\exists \alpha > 0 \,\forall s, s' : \varphi_{s'}^s \neq 0 \Rightarrow d(\pi_X(s), \pi'_X(s')) < \alpha$, and
\item \label{c_controlled} $\forall \varepsilon>0 \,\exists t_0>0 \,\forall s,s': \pi_\bbN(s) > t_0 \wedge \varphi_{s'}^s \neq 0 \Rightarrow d(\pi_X(s), \pi'_X(s')) < \varepsilon$.
\end{enumerate}
Composition is given by matrix multiplication, i.e.\ $(\varphi \comp \psi)^s_{s''} := \sum\limits_{s'} \varphi^{s'}_{s''} \comp \psi^s_{s'}$.
\end{description}
A morphism $\varphi: (S, \pi, M) \rightarrow (S', \pi', M')$ in this category is \emph{$\varepsilon$-controlled} if 
$$\forall s \in S, s' \in S': \varphi_{s'}^s \neq 0 \Rightarrow d(\pi_X(s), \pi'_X(s')) < \varepsilon \wedge |\pi_\bbN(s) - \pi'_\bbN(s')| < \varepsilon.$$
This category can be modified in various ways. 
For instance, there are further properties that morphisms can fulfil:
\begin{enumerate}
\setcounter{enumi}{5}
\item \label{c_concentrated_morphisms} $\forall s \in S, s'\in S': \varphi_{s'}^s \neq 0 \Rightarrow \pi_X(s) = \pi'_X(s')$.
\end{enumerate}
If we want the category to satisfy Property \ref{c_concentrated_morphisms}, we add a subscript 0 to the name, i.e.\ $\calO_{G,0}^\lf$.
There are also properties that put further restrictions on the objects. We list them here and add the modifications to the notation in brackets.
\begin{enumerate}
\setcounter{enumi}{6}
\item \label{c_compact} $\exists K \subseteq X \text{ cocompact}: \im(\pi_X) \subseteq K$. \hfill (Drop $^\lf$ from the notation.)
\item \label{c_finite_range} $\im(\pi_\bbN)$ is finite.\hfill (Use the letter $\calT$ instead of $\calO$.)
\item \label{c_only_space} $\im(\pi_\bbN) \subseteq \{0\}$.\hfill (Use the letter $\calC$ instead of $\calO$.)
\end{enumerate}
If $G$ is the trivial group, we also drop it from the notation.
Note that Property~\ref{c_only_space} implies Property~\ref{c_finite_range}. This leaves twelve possible variations of the category of which we will use six within this paper. The names of these variations are listed here. The ones we will not use have been greyed out.
$$
\calO^\lf \;\quad\;
{\color{lightgray} \calO^\lf_0} \;\quad\;
\calO \;\quad\;
{\color{lightgray} \calO_0} \;\quad\;
\calT^\lf \;\quad\;
{\color{lightgray} \calT^\lf_0} \;\quad\;
\calT \;\quad\;
{\color{lightgray} \calT_0} \;\quad\;
\calC^\lf \;\quad\;
{\color{lightgray} \calC^\lf_0} \;\quad\;
{\color{lightgray} \calC} \;\quad\;
\calC_0
 $$
Note also that Property \ref{c_controlled} is only necessary for the categories $\calO^\lf$ and $\calO$, as in all other cases it is already implied by Property \ref{c_concentrated_morphisms}, \ref{c_finite_range} or \ref{c_only_space}.
\end{definition}

\begin{lemma}
	\label{lem:karoubi_filtration}
	For all locally finite metric spaces $X$ with an isometric $G$-action,  $\big(\calO^\lf(X; \calA), \calT^\lf(X; \calA)\big)$, $\big(\calO_G(X; \calA), \calT_G(X; \calA)\big)$, $\big(\calO^\lf(X; \calA), \calO(X; \calA)\big)$, and $\big(\calT^\lf(X; \calA), \calT(X; \calA)\big)$ yield Karoubi filtrations.
\end{lemma}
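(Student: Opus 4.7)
The plan is to build the Karoubi decomposition directly from the matrix description of the morphisms. Given $A = (S^A, \pi^A, M^A) \in \calA$, $X = (S, \pi, M) \in \calC$, and morphisms $f: A \to X$, $g: X \to A$, I set
$$S_0 := \set{s \in S}{\exists a \in S^A \text{ with } f^a_s \neq 0 \text{ or } g^s_a \neq 0},$$
which is $G$-invariant by equivariance of $f$ and $g$. Define $A' := (S_0, \pi|_{S_0}, M|_{S_0})$ and $Y := (S \setminus S_0, \pi|_{S \setminus S_0}, M|_{S \setminus S_0})$; these split $X$ in $\calC$ via the obvious identity-block inclusion $\iota: A' \to X$ and projection $\pr: X \to A'$, which are $0$-controlled and hence valid morphisms of $\calC$. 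The matrix $(f')^a_s := f^a_s$ for $s \in S_0$ is then well-defined as a morphism $f' : A \to A'$ factoring $f = \iota \comp f'$, and dually one obtains $g' : A' \to A$ with $g = g' \comp \pr$; all control conditions on $f'$ and $g'$ are inherited from $f$ and $g$.

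The core content of the lemma --- and the step where the four cases need to be examined separately --- is the verification that $A'$ again lies in the subcategory $\calA$. Property \ref{c_locally_compact} is automatic, since $\pi|_{S_0}^{-1}(K) = \pi^{-1}(K) \cap S_0$ is a $G$-subset of the $G$-cofinite set $\pi^{-1}(K)$. For the pairs $(\calO^\lf, \calT^\lf)$ and $(\calO_G, \calT_G)$, property \ref{c_bounded_n} confines $\pi_\bbN(S_0)$ to an $\alpha$-neighborhood of the finite set $\pi^A_\bbN(S^A)$ in $\bbN$, which is itself finite. For $(\calO^\lf, \calO)$ and $(\calT^\lf, \calT)$, property \ref{c_bounded_x} places $\pi_X(S_0)$ inside an $\alpha$-neighborhood of a cocompact subset of $X$.

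The main obstacle is this last inheritance statement: to conclude that $A' \in \calO$ or $\calT$, one needs that the $\alpha$-neighborhood of a cocompact $G$-subset of the locally finite metric space $X$ is again cocompact. This uses local finiteness of $X$ (so closed $\alpha$-balls are compact) together with the isometric $G$-action (so the neighborhood is $G$-invariant and its image in $X/G$ remains relatively compact). Every other check --- row and column finiteness of $\iota$ and $\pr$, the remaining matrix-support conditions on $f'$ and $g'$, and the identification $X \cong A' \oplus Y$ --- reduces to restricting an existing morphism or object to a subset, and is essentially formal.
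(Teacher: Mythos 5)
Your proof is correct and takes essentially the same route as the paper's: pick a $G$-invariant subset $S_0 \subseteq S$ containing the matrix support of $f$ and $g$, split $X \cong A' \oplus Y$ along it, and show $A' \in \calA$ using the $\alpha$-neighbourhood bounds coming from Properties~\ref{c_bounded_n} and~\ref{c_bounded_x}. The only cosmetic difference is that you take the exact support $S_0$ where the paper takes the larger metric preimage $\pi_\bbN^{-1}(\{1,\dots,L+\alpha\})$ or $\pi_X^{-1}(B_\alpha(K))$; since your verification that $A'$ lies in the subcategory passes through exactly that same estimate, the two arguments coincide in substance.
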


\begin{proof}
	Essentially, these are only two cases: the $(\calO, \calT)$-case and the $(-^\lf, -)$-case.
	In either case we consider morphisms
	$$\varphi: (T, \rho, N) \rightarrow (S, \pi, M) \quad \text{ and } \quad \psi: (S, \pi, M) \rightarrow (T, \rho, N).$$
	In both cases we find a direct summand $(S', \pi|_{S'}, M|_{S'})$ of $(S, \pi, M)$, with $S' \subseteq S$, which is also an object in the according subcategory, together with the following factorizations.
	\begin{center}
		\begin{tikzcd}[row sep=1.5em, column sep=-1em]
			(T, \rho, N) \arrow[rr, "\varphi"] \arrow[dr, swap, "\varphi"] && (S, \pi, M) \\
			& (S', \pi|_{S'}, M|_{S'}) \arrow[ur, swap, "\iota"]
		\end{tikzcd} $\quad$ 
		\begin{tikzcd}[row sep=1.5em, column sep=-1em]
			(S, \pi, M) \arrow[rr, "\psi"] \arrow[dr, swap, "\pr"] && (T, \rho, N) \\
			& (S', \pi|_{S'}, M|_{S'}) \arrow[ur, swap, "\psi"]
		\end{tikzcd}
	\end{center}
	In the $(\calO, \calT)$-case there is an $L \in \bbN$ such that $\im(\pi_{\bbN}) \subseteq \{1, \dots, L\}$.
	We choose $\alpha > 0$ such that Property \ref{c_bounded_n} is fulfilled and set $S' := \pi_{\bbN}^{-1}(1, \dots, L + \alpha)$.
	
	In the $(-^\lf, -)$-case there is a compact set $K \subseteq X$ such that $\im(\pi_X) \subseteq K$.
	We choose $\alpha > 0$ such that Property \ref{c_bounded_x} is satisfied for both $\varphi$ and $\psi$ and set $S' := \pi_{X}^{-1}(B_\alpha(K))$, where $B_\alpha(K)$ is the closed $\alpha$-neighbourhood of $K$.
\end{proof}

\begin{lemma}
	\label{lem:O_is_trivial}
	For every additive category $\calA$ the $K$-Theory of the category $\calO_G(\pt; \calA)$ (with decorations) is trivial.
\end{lemma}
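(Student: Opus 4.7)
The category $\calO_G(\pt;\calA)$ collapses substantially on a point: conditions \ref{c_bounded_x}, \ref{c_controlled}, and \ref{c_concentrated_morphisms} of Definition \ref{def:controlled_categories} are automatic (the metric on $\pt$ is trivial), and condition \ref{c_compact} is vacuous. Therefore all four decorated variants $\calO_G^\lf(\pt;\calA)$, $\calO_G(\pt;\calA)$, $\calO_{G,0}^\lf(\pt;\calA)$ and $\calO_{G,0}(\pt;\calA)$ coincide as additive categories, and it is enough to give a single Eilenberg swindle in the $\bbN$-coordinate that works for all of them simultaneously.

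The plan is as follows. Let $T \colon \calO_G(\pt;\calA) \to \calO_G(\pt;\calA)$ be the translation endofunctor that replaces $\pi_\bbN$ by $\pi_\bbN + 1$ while leaving $S$, $M$ and the $G$-action unchanged; on morphisms it is the identity on matrices. Define the \emph{swindle sum} endofunctor
\[
\Sigma(X) := \bigoplus_{n \geq 0} T^n X,
\]
whose underlying free $G$-set is $\bigsqcup_{n \geq 0} S$ with the $n$-th copy of $\pi_\bbN$ shifted by $n$, and whose action on morphisms is block diagonal. I need to verify that $\Sigma(X)$ is a legitimate object: the preimage of any cocompact (i.e.\ finite) subset $\{0,\dots,N\} \subseteq \bbN$ intersects only the first $N+1$ summands, each of which inherits finite $G$-quotient from $X$, so property \ref{c_locally_compact} holds. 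The shift-by-$(-1)$ re-indexing, which sends the $(n{+}1)$-st summand of $\Sigma(X)$ identically onto the $n$-th summand, is a matrix of uniformly $1$-controlled morphisms (in the $\bbN$-direction) and hence a morphism in the category; its inverse is uniformly $1$-controlled as well. Splitting off the $0$-th summand then yields a natural isomorphism of exact endofunctors $\Sigma \cong \Id \oplus \Sigma$.

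The Eilenberg swindle concludes the argument: for an exact endofunctor $\Sigma$ satisfying $\Sigma \cong \Id \oplus \Sigma$ naturally, the additivity theorem for non-connective $K$-theory yields an equivalence $K(\Sigma) \simeq K(\Id) \vee K(\Sigma)$, which forces $K(\Id) \simeq \ast$ after cancellation. The only genuine content is the verification of property \ref{c_locally_compact} for $\Sigma(X)$ and the uniform $\bbN$-control on the re-indexing morphism, both of which are immediate because each summand is translated by a fixed finite amount; I do not foresee any deeper obstacle.
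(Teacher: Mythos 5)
Your proposal is correct and is essentially the same argument as the paper's: both set up the Eilenberg swindle in the $\bbN$-direction by summing the shift functors $F^n\colon \pi_\bbN \mapsto \pi_\bbN + n$ (your $T^n$) and use the $1$-controlled re-indexing to obtain $\Sigma \cong \Id \oplus \Sigma$, which forces $K(\calO_G(\pt;\calA))=0$. You are more explicit than the paper about checking property \ref{c_locally_compact} for $\Sigma(X)$ and about the decorated variants collapsing over $\pt$, but no new idea is introduced.
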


\begin{proof}
	An Eilenberg swindle is an endofunctor $F$ with the property $F + \id \simeq F$.
	Since any two naturally isomorphic functors induce the same map on $K$-theory, it follows that $F(\id) = 0$.
	We define an Eilenberg swindle on $\calO_G(\pt; \calA)$ as follows.
	$$F^n: (S, \pi, M) \mapsto (S, \pi^n, M) \quad \text{ with } \quad \pi^n(s) = (\pi_\pt(s), \pi_\bbN(s) + n)$$
	$$F := \sum_{n \in \bbN} F^n : \calO_G(\pt; \calA) \rightarrow \calO_G(\pt; \calA)$$
	This functor is well-defined because for $\pt$ Property \ref{c_controlled} is trivially satisfied.
\end{proof}

\begin{definition}[Group Ring Construction]
	The additive category $\calA[G]$ has the same objects as $\calA$ and the following morphisms
	$$\hom_{\calA[G]}(X,Y) = \set{\sum\nolimits_{g \in G} \varphi_g \; g}{\varphi_g : g.X \rightarrow Y}.$$
	In the formal sum we only allow a finite number of summands to be non-trivial.
\end{definition}

\begin{lemma}
	\label{lem:group_ring_construction}
	For any group $G$, non-empty space $X$ and additive category $\calA$, there is a natural equivalence of categories
	$$\calT_G(X; \calA) \simeq \calA[G].$$
\end{lemma}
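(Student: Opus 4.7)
The plan is to build an explicit functor $F : \calA[G] \to \calT_G(X; \calA)$, using a choice of basepoint $x_0 \in X$, and then to show it is fully faithful and essentially surjective.

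\textbf{Construction and well-definedness.} Fix $x_0 \in X$ (possible since $X \neq \emptyset$), let $G$ act on itself by left translation, and set $F(M) := (G,\, \pi_0,\, M_\bullet)$, where $\pi_0(g) := (g.x_0,\, 0)$ and $M_\bullet(g) := g.M$. On a morphism $\varphi = \sum_{g} \varphi_g\, g : M \to N$ in $\calA[G]$, with $\varphi_g : g.M \to N$ of finite support, define the matrix
\[
F(\varphi)^{h'}_h \,:=\, h.\varphi_{h^{-1}h'} \;\in\; \Hom_\calA(h'.M,\, h.N),
\]
with the convention $\varphi_k = 0$ off the support. A direct calculation shows that this matrix multiplication reproduces the convolution product of $\calA[G]$, so $F$ is functorial. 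The object axioms of $\calT_G(X;\calA)$ are checked directly: $G$ acts freely on itself, $\im(\pi_0)_\bbN = \{0\}$ is finite, $\im(\pi_0)_X = G.x_0$ is cocompact, and Property \ref{c_locally_compact} holds because $G$ consists of a single orbit. For morphisms, $G$-equivariance $k.(h.\varphi_{h^{-1}h'}) = (kh).\varphi_{(kh)^{-1}(kh')}$ is immediate; rows and columns have at most $|\mathrm{supp}(\varphi)|$ non-zero entries; the $\bbN$-separation vanishes; and the $X$-separation $d(h'.x_0, h.x_0) = d(x_0, (h^{-1}h').x_0)$ is bounded by $\max_{k \in \mathrm{supp}(\varphi)} d(x_0, k.x_0)$ since $G$ acts isometrically.

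\textbf{Fully faithful.} Given $\Phi : F(M) \to F(N)$ in $\calT_G(X;\calA)$, set $\varphi_g := \Phi^g_1 : g.M \to N$; column-finiteness of $\Phi$ forces $\varphi_g \neq 0$ for only finitely many $g$, so $\varphi := \sum_g \varphi_g\, g$ is a legitimate morphism in $\calA[G]$. Applying the $G$-equivariance of $\Phi$ in the form $\Phi^{h'}_h = h.\Phi^{h^{-1}h'}_1$ gives $\Phi^{h'}_h = h.\varphi_{h^{-1}h'} = F(\varphi)^{h'}_h$, so $F(\varphi) = \Phi$. The reverse identity $F(\varphi)^g_1 = \varphi_g$ shows faithfulness.

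\textbf{Essentially surjective.} For $(S, \pi, M) \in \calT_G(X; \calA)$, Properties \ref{c_compact} and \ref{c_finite_range} give $\im(\pi) \subseteq K \times F_0$ with $K \subseteq X$ cocompact and $F_0 \subseteq \bbN$ finite; since $K \times F_0$ is cocompact in $X \times \bbN$, Property \ref{c_locally_compact} forces $S = \pi^{-1}(K \times F_0)$ to be cofinite, so the free $G$-set $S$ has only finitely many orbits, represented by some $s_1, \dots, s_n$. Put $M_0 := \bigoplus_{i=1}^n M(s_i) \in \calA$ and define $\Psi : F(M_0) \to (S, \pi, M)$ by letting $\Psi^g_{g.s_i}$ be the canonical projection $g.M_0 \twoheadrightarrow g.M(s_i)$ and zero otherwise. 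One checks that $\Psi$ is $G$-equivariant, has finite rows and columns, and satisfies the control conditions via $\alpha := 1 + \max_i\big(d(x_0, \pi_X(s_i)) + |\pi_\bbN(s_i)|\big)$; its inverse is built from the canonical inclusions. Naturality in the $G$-additive category $\calA$ is manifest from the formulas. The conceptually critical step is this finiteness-of-orbits observation, where all three defining conditions of a $\calT_G$-object conspire to pin down its shape; everything else is careful index bookkeeping, the main pitfall being to keep the direction of the $G$-action on $\calA$ straight in the matrix formula $F(\varphi)^{h'}_h = h.\varphi_{h^{-1}h'}$.
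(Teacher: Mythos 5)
Your proof is correct and follows essentially the same route as the paper: pick a basepoint, send $M$ to $(G,\, g \mapsto g.x_0,\, g \mapsto g.M)$, and observe that equivariance plus row/column finiteness make $F$ fully faithful while the orbit decomposition of $S$ gives essential surjectivity. You also spell out a detail the paper glosses over, namely that Properties \ref{c_locally_compact}, \ref{c_compact}, and \ref{c_finite_range} together force $S$ to have only finitely many $G$-orbits, so that $\bigoplus_i M(s_i)$ really lands in $\calA$.
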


\begin{proof}
	Choose a base point $x_0 \in X$.
	We define a functor $\calA[G] \rightarrow \calT_G(X; \calA)$ via $A \mapsto (G,\, g \mapsto g.x_0,\, g \mapsto g.A)$.
	On morphisms we map as follows
	$$\left[\sum_{g \in G} \varphi_g \; g, \;\; \varphi_g : g.A \rightarrow B\right] \mapsto \varphi: (G, g.x_0, g.A) \rightarrow (G, g.x_0, g.B),$$
	where $\varphi$ is determined by $ \varphi_{1}^g := \varphi_g : g.A \rightarrow B$.
	Clearly, the functor is fully faithful.
	It is also essentially surjective, since any object $(S, \pi, M) \in \calT_G(X; \calA)$ is isomorphic to the image of $\bigoplus_{s \in S'} M(s)$, where $S' \subseteq S$ is a set of representatives for the $G$-orbits of $S$.
\end{proof}
	\section{Assembly Maps}
\label{sec:assembly_maps}

Morphisms in the categories $\calO^\lf(X)$ and $\calO(X)$ have the property of becoming arbitrarily short when moving to infinity along the $\bbN$-direction.
If we could ignore the part that is far away from infinity, then these categories would only detect local information of the space $X$.
This can be achieved by forming the quotient category $\calO(X) / \calT(X)$.

To make notation simpler we abbreviate
$$\calD_G(X; \calA) := \bigslant{\calO_G(X; \calA)}{\calT_G(X; \calA)}$$
and
$$\calD^\lf(X; \calA) := \bigslant{\calO^\lf(X; \calA)}{\calT^\lf(X; \calA)}.$$
It is a fact (cf.\ \cite[Section 5]{Bartels.2005} and \cite[Section 7.2]{Bartels.2007b}) that the functor
$$ X \mapsto H^G_n(X; \calA) := K_{n+1}\big(\calD_G(X; \calA)\big) $$
yields a $G$-equivariant homology theory with $H^G_n(G/H; \calA) = K_n(\calA[H])$.
Similarly, the functor
$$ X \mapsto H^\lf_n(X; \calA) := K_{n+1}\big(\calD^\lf(X; \calA)\big) $$
yields a locally finite homology theory with $H^\lf_n(\pt; \calA) = K_n(\calA)$, cf.\ \cite{Weiss.2002}.
Clearly, both definitions have many similarities, e.g.\ $H_*^\lf(X; \calA) = H_*(X; \calA)$ for compact spaces $X$.
A very useful aspect of these definitions is the existence of long exact Karoubi sequences (cf.\ \cite{Cardenas.1995})
$$ \dots \rightarrow
K_n\big(\calT_G(X)\big) \rightarrow
K_n\big(\calO_G(X)\big) \rightarrow
K_n\big(\calD_G(X)\big) \xrightarrow{\partial}
K_{n-1}\big(\calT_G(X)\big) \rightarrow \dots$$
and
$$ \dots \rightarrow
K_n\big(\calT^\lf(X)\big) \rightarrow
K_n\big(\calO^\lf(X)\big) \rightarrow
K_n\big(\calD^\lf(X)\big) \xrightarrow{\partial}
K_{n-1}\big(\calT^\lf(X)\big) \rightarrow \dots$$
which we have due to Lemma \ref{lem:karoubi_filtration}.

\subsection{The Farrell--Jones Conjecture}

Let $\calA$ be an additive category and $G$ a group.
Let $E$ be a metric space with an isometric $G$-action. The canonical map $E \rightarrow \pt$ induces a map \vspace{-0.5ex}
$$\alpha : H_*^G(E; \calA) \rightarrow H_*^G(\pt; \calA) \cong K_*\big(\calA[G]\big).$$
We are particularly interested in the space $E = E_\calF G$, where $\calF$ is a family of subgroups of $G$.
The map $\alpha$ is called \emph{assembly map}.

\begin{conjecture}[Farrell--Jones conjecture with coefficients]
The assembly map
$$\alpha_\VCyc : H_*^G(E_\VCyc G; \calA) \rightarrow K_*\big(\calA[G]\big)$$
is an isomorphism for all groups $G$ and additive categories $\calA$.
Here $\VCyc$ is the family of virtually cyclic subgroups of $G$.
\end{conjecture}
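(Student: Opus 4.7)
This statement is the Farrell--Jones conjecture itself rather than a theorem of the paper, and I want to be upfront that no complete proof is known in the generality stated; what follows is the template that has been executed for particular classes of groups (hyperbolic, CAT(0), lattices in Lie groups, etc.) by Bartels, Lück, Reich, and others, and is what I would try to adapt.

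The plan is to attack the assembly map $\alpha_\VCyc$ via controlled algebra, in the category $\calD_G(E_\VCyc G; \calA)$ introduced in the excerpt, using the Karoubi sequence relating $\calT_G$, $\calO_G$ and $\calD_G$. First I would reduce to a smaller family: by a transitivity / inheritance argument it suffices to treat the family $\Fin$ together with a separate verification for virtually cyclic groups (the latter is handled by the classical Bass--Heller--Swan style decomposition of $K_*(\calA[V])$ for $V \in \VCyc$). So the real target becomes proving that $H_*^G(E_\Fin G; \calA) \to K_*(\calA[G])$ is an isomorphism, modulo the known Nil-term corrections.

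Second, I would seek a geometric input for $G$: an isometric $G$-action on a finite-dimensional contractible metric space $Y$ with ``small'' isotropy (a finite $\calF$-amenable action, or a ``transfer reducible'' structure, with $\calF \subseteq \VCyc$). Using such a $Y$, one constructs in the controlled categories $\calO_G(Y \times E_\calF G; \calA)$ a \emph{transfer} functor $K_*(\calA[G]) \to K_*(\calO_G(\ldots))$ that, composed with the forgetful map back to $H_*^G(E_\calF G; \calA)$, becomes (up to controlled homotopy) the identity. The identification $\calT_G(X;\calA) \simeq \calA[G]$ from Lemma \ref{lem:group_ring_construction} and the Eilenberg swindle argument of Lemma \ref{lem:O_is_trivial}, promoted to the $G$-equivariant setting, would be used to show that the ``$\calO_G$-terms'' vanish after suitable control, so that the Karoubi boundary $\partial$ becomes an isomorphism in the relevant range. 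Injectivity and surjectivity of $\alpha_\VCyc$ would then follow from tracing through the long exact sequence comparing $\calO_G(E_\VCyc G;\calA)$ with $\calO_G(\pt;\calA)$.

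The hard part, and the reason the conjecture remains open in general, is precisely step two: producing the geometric action on $Y$. For groups with no non-trivial geometry (no negative curvature, no CAT(0) model, no linear representation we can exploit), there is simply no known candidate for $Y$, and in related settings such as Baum--Connes the analogous strategy is known to fail for Gromov monster groups. In the present paper I would therefore \emph{not} attempt to verify this conjecture directly, but rather take it as a hypothesis on the specific group $G$ and use it, together with the controlled-category machinery built in Sections \ref{sec:basics}--\ref{sec:assembly_maps}, to deduce the bounded/coarse statement about the box space $\bigsqcup_n G/H_n$.
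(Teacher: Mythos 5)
You correctly recognize that this is a \emph{conjecture}, not a theorem of the paper, and that no proof is expected or supplied; the paper states it only as a hypothesis to be imposed on $G$ in Theorem~\ref{thm:reduction_for_residually_finite_groups}. Your sketch of the standard transfer-reducibility strategy and its limitations is accurate, and your closing observation --- that the right move here is to take the conjecture as an assumption on $G$ and feed it into the controlled-category machinery to deduce the coarse statement for box spaces --- is exactly the role it plays in the paper.
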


\begin{remark}
\label{rem:assembly_map_vs_connecting_hom}
The following diagram commutes by naturality of $\partial$.
\begin{center}
\begin{tikzcd}[column sep=-2.9ex]
H_*^G(E; \calA) =\;\;\, & K_{* + 1}\big(\D_G(E; \calA)\big)  \arrow[rr] \arrow[d, "\partial"] & \hspace{1.5cm} & K_{* + 1}\big(\D_G(\pt; \calA)\big) \arrow{d}{\partial}[swap]{\cong} \\
& K_*\big(\T_G(E; \calA)\big) \arrow[rr, "\cong"] && K_*\big(\calT_G(\pt; \calA)\big) & \cong K_*\big(\calA[G]\big)
\end{tikzcd}
\end{center}
The connective homomorphism on the right hand side is an isomorphism because $K_*(\calO_G(\pt; \calA)) = 0$, see Lemma \ref{lem:O_is_trivial}. The lower map is an isomorphism because $\calT_G(-; \calA)$ is naturally equivalent to the constant functor $\calA[G]$, see Lemma \ref{lem:group_ring_construction}.
Thus, in order to study the assembly map, we might as well study the connective homomorphism \vspace{-0.5ex}
$$\partial: K_{* + 1}\big(\D_G(E; \calA)\big) \rightarrow K_*\big(\T_G(E; \calA)\big),$$
which will be our point of view from now on.
\end{remark}

\subsection{Motivation for a Coarse Conjecture}

We have the following commutative diagram, in which the vertical arrows are induced by the forgetful maps. \vspace{-1em}
\begin{center}
\begin{tikzcd}
K_{*+1}\big(\D_G(E; \calA)\big) \arrow[r, "\partial"] \arrow[d] & K_*\big(\T_G(E; \calA) \big) \arrow[d] \\
K_{*+1}\big(\D^\lf(E; \calA)\big) \arrow[r, "\partial^\lf"] & K_*\big(\T^\lf(E; \calA) \big)
\end{tikzcd}
\end{center}
More conceptually, this diagram can be written as
\begin{center}
\begin{tikzcd}
H_*^G(E; \calA) \arrow[r, "\alpha"] \arrow[d] & K_*\big(\calA[G]\big) \arrow[d] \\
H_*^\lf(E; \calA) \arrow[r, "\alpha_c"] & K_*\big(\calC^\lf(E; \calA) \big)
\end{tikzcd}
\end{center}
in which we have exchanged the categories on the right hand side by equivalent ones.
In particular, we make use of the facts that $\calT_G(E; \calA) \simeq \calA[G]$ and $\calT^\lf(E; \calA) \simeq \calC^\lf(E; \calA)$.
(Note that, on the left hand side, we merely unravelled the definition of $H^G$ and $H^\lf$.)
The upper arrow is the the Farrell--Jones assembly map.
The lower one is called \emph{coarse} or \emph{bounded assembly map}.

If the homomorphism $H_*^G(E; \calA) \rightarrow H_*^\lf(E; \calA)$ induced by the forgetful functor were to be an isomorphism (or at least injective), then injectivity of $\alpha_c$ would imply injectivity of $\alpha$.
This is merely a very rough sketch of how the usual and the coarse assembly map are connected.
Since the forgetful map above is not an isomorphism, we have to help ourselves with a version of the commutative square that involves homotopy fixed points.
This is nicely written up in \cite[Section 3.1]{Kasprowski.2014}.
Gaining information about the assembly map like this is called \emph{descent principle} which goes back to Carlsson and Pedersen, \cite{Carlsson.Pedersen.1995}.

It is natural to ask whether the following conjecture is true.

\begin{conjecture}[{Cf.\ \cite[Conjecture 6.28]{Roe.1993}}]
\label{conj:uniformly_contractible_conjecture}
The coarse assembly map 
$$ \alpha_c : H_*^\lf\big(E; \calA\big) \rightarrow K_*\big(\calC^\lf(E; \calA)\big)$$
is an isomorphism for all uniformly contractible spaces $E$.
\end{conjecture}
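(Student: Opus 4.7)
The plan is to adapt the argument of Remark~\ref{rem:assembly_map_vs_connecting_hom} and Lemma~\ref{lem:O_is_trivial} to the locally finite, trivial-group setting. Applying the Karoubi long exact sequence associated with the filtration $(\calO^\lf(E;\calA), \calT^\lf(E;\calA))$ of Lemma~\ref{lem:karoubi_filtration} yields
$$\cdots \to K_{n+1}\big(\calO^\lf(E;\calA)\big) \to K_{n+1}\big(\calD^\lf(E;\calA)\big) \xrightarrow{\partial^\lf} K_n\big(\calT^\lf(E;\calA)\big) \to K_n\big(\calO^\lf(E;\calA)\big) \to \cdots$$
Together with the identifications $K_{n+1}(\calD^\lf(E;\calA)) = H_n^\lf(E;\calA)$ and $\calT^\lf(E;\calA) \simeq \calC^\lf(E;\calA)$ (the lf-analogue of Lemma~\ref{lem:group_ring_construction}), the boundary $\partial^\lf$ coincides with $\alpha_c$. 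Hence it suffices to prove $K_*\big(\calO^\lf(E;\calA)\big) = 0$.

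For the vanishing I would try to generalise the Eilenberg swindle from Lemma~\ref{lem:O_is_trivial}. Over a point the shift $\pi_\bbN \mapsto \pi_\bbN + n$ works precisely because Property~\ref{c_controlled} is automatic; for a uniformly contractible space one has to actively contract objects towards the basepoint as the $\bbN$-coordinate grows. Picking a uniform contraction $H: E \times [0, \infty) \to E$ with $H(\cdot, 0) = \id_E$ and $H(x, t) \to x_0$ uniformly on bounded sets as $t \to \infty$, set
$$F^n(S, (\pi_E, \pi_\bbN), M) := \big(S, (H(\pi_E, n), \pi_\bbN + n), M\big), \qquad F := \sum_{n \in \bbN} F^n.$$
Granted that $F$ is a well-defined endofunctor of $\calO^\lf(E;\calA)$, reindexing $F^n \mapsto F^{n+1}$ provides the natural isomorphism $F + \id \simeq F$, and the argument of Lemma~\ref{lem:O_is_trivial} applies verbatim.

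The main obstacle is verifying that $F$ actually lands in $\calO^\lf(E;\calA)$. Property~\ref{c_controlled} is the heart of the matter: for a morphism supported in large $\bbN$-height, composing with $H(\cdot, n)$ for $n$ large must yield uniformly small $E$-displacement, and this is exactly what uniform contractibility of $E$ is tailored to provide. One must simultaneously ensure Properties~\ref{c_locally_compact}, \ref{c_bounded_x} and \ref{c_rows_columns_finite} for the image, which forces $H$ to be coarsely uniformly proper in a quantitative sense — a strictly stronger input than pointwise contractibility. In fact, this plan cannot succeed in full generality: the Higson--Lafforgue--Skandalis counterexamples to the coarse Baum--Connes conjecture transfer to this setting and obstruct any such swindle whenever $E$ coarsely contains expanders. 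A genuine proof therefore appears to need additional structure on $E$ such as bounded geometry together with finite asymptotic dimension or a coarse embedding into Hilbert space, under which either a refined variant of the swindle or the descent machinery of Carlsson--Pedersen alluded to in the introduction can be pushed through.
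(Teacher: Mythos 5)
This statement is posed in the paper as a \emph{conjecture}, not as a theorem; the paper does not prove it (nor claim it to be known). It is the direct analogue of Roe's Conjecture~6.28 for the coarse Baum--Connes assembly map, and is listed precisely to motivate the definitions that follow, not as an established result. Your ultimate conclusion --- that the naive swindle cannot work in full generality and that a proof would require additional hypotheses on $E$ --- is therefore exactly the right assessment, and you should not have expected to close the argument.

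Your intermediate reduction is sound and in fact matches the paper's methodology elsewhere: the Karoubi long exact sequence for $(\calO^\lf, \calT^\lf)$, the identification of $\partial^\lf$ with $\alpha_c$, and the observation that it suffices to show $K_*\big(\calO^\lf(E;\calA)\big) = 0$ are exactly the strategy the paper employs for the Rips-complex version (cf.\ Corollary~\ref{cor:third_term} and the footnote to Conjecture~\ref{conj:main_conjecture}). You are also right that Property~\ref{c_controlled} is the crux, and that extending the Eilenberg swindle of Lemma~\ref{lem:O_is_trivial} from $\pt$ to a general uniformly contractible $E$ requires quantitative control on the contraction that pointwise uniform contractibility does not provide. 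One correction, though: you assert that the Higson--Lafforgue--Skandalis expander counterexamples ``transfer to this setting'' and obstruct the swindle. This runs contrary to the paper's own thesis --- the entire point of the Gromov Monsters section is that expander-based obstructions to the coarse Baum--Connes conjecture do \emph{not} carry over to the algebraic Farrell--Jones setting, and the main theorem gives positive results for box spaces that contain large-girth expanders. (The known counterexamples for uniformly contractible spaces in the $C^*$-setting are of a different nature, e.g.\ Dranishnikov--Ferry--Weinberger, and exploit operator-algebraic structure unavailable here.) The truth of this conjecture for uniformly contractible $E$ in the algebraic setting is genuinely open, which is why the paper labels it a conjecture rather than proving or disproving it.
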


By replacing $H^\lf_*(-; \calA)$ by $\colim_{d \geq 0} H^\lf_*(P_d(-); \calA)$, Conjecture \ref{conj:uniformly_contractible_conjecture} can be modified such that we do not have to assume $E$ to be uniformly contractible.
Here, $P_d(X)$ is the \emph{Rips complex} of a metric space $X$.

\begin{definition}
	Let $X$ be a metric space and $d \geq 0$. The \emph{Rips complex} of $X$ is the simplicial complex $P_d(X)$ defined as follows: A finite subset $\sigma \subseteq X$ is a simplex if $\diam(\sigma) \leq d$.
\end{definition}

The assignment $X \mapsto \colim_{d \geq 0} H^\lf_*(P_d(X); \calA)$ is functorial w.r.t.\ coarse maps and a simple homotopy lifting argument shows that any two coarsely equivalent maps induce the same map.

\begin{conjecture}
	\label{conj:main_conjecture}
	Let $X$ be a locally compact metric space. 
	The coarse assembly maps combine to an isomorphism
	$$\colim_{d \geq 0} H_*^\lf\big(P_d(X); \calA\big) \rightarrow K_*\big(\calC^\lf( X; \calA)\big).$$
	More precisely, this morphism is induced by the following compositions
	$$H_*^\lf\big(P_d(X); \calA\big) \xrightarrow{\;\alpha_c\;} K_*\big(\calC^\lf(P_d(X); \calA)\big) \xrightarrow{\;\cong\;} K_*\big(\calC^\lf(X; \calA)\big).$$
	Here, the latter map is an isomorphism because the inclusion $X \rightarrow P_d(X)$ is a coarse equivalence.
\end{conjecture}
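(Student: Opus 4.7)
The natural approach is to identify the coarse assembly map $\alpha_c$ at each Rips scale $d \geq 0$ with the connecting homomorphism of a Karoubi sequence, then to kill the ambient category by an Eilenberg swindle, and finally to pass to the colimit in $d$. Concretely, Lemma~\ref{lem:karoubi_filtration} gives the Karoubi filtration $(\calO^\lf(P_d(X); \calA), \calT^\lf(P_d(X); \calA))$, whose long exact sequence has connecting homomorphism
$$\partial_d: K_{*+1}\bigl(\calD^\lf(P_d(X); \calA)\bigr) \longrightarrow K_*\bigl(\calT^\lf(P_d(X); \calA)\bigr).$$
By definition of $H_*^\lf$ the left-hand side is $H_*^\lf(P_d(X); \calA)$, and by the locally finite analogue of Lemma~\ref{lem:group_ring_construction} combined with the coarse equivalence $X \hookrightarrow P_d(X)$ the right-hand side is $K_*(\calC^\lf(X; \calA))$; a diagram chase in the spirit of Remark~\ref{rem:assembly_map_vs_connecting_hom} identifies $\partial_d$ with $\alpha_c$ at scale $d$.

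Under this identification the conjecture reduces to the vanishing $K_*(\calO^\lf(P_d(X); \calA)) = 0$ for every $d$, since then $\partial_d$ is an isomorphism and the claim follows from compatibility of $K$-theory with filtered colimits. I would attempt this vanishing by imitating the Eilenberg swindle of Lemma~\ref{lem:O_is_trivial}, setting $F^n(S, \pi, M) := (S, (\pi_X, \pi_\bbN + n), M)$ and $F := \bigoplus_{n \in \bbN} F^n$. Cocompactness (Property~\ref{c_locally_compact}) is preserved because cocompact subsets of $P_d(X) \times \bbN$ have bounded $\bbN$-range, so only finitely many shifts meet any of them; the swindle identity $F \oplus \id \simeq F$ would then force $[\id] = 0$ in $K_*(\calO^\lf(P_d(X); \calA))$.

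The decisive obstacle is Property~\ref{c_controlled}: morphisms in $\calO^\lf$ must become arbitrarily short in the $X$-direction as $\pi_\bbN \to \infty$. For a morphism $\varphi$ with non-trivial $X$-displacement at some finite $\pi_\bbN$-level, each shift $F^n(\varphi)$ retains that displacement at height $n + \text{const}$, so the infinite sum $\sum_n F^n(\varphi)$ has uniformly positive $X$-displacement at every height, violating Property~\ref{c_controlled}. This failure is the essence of the conjecture: were the shift swindle to work unconditionally, the coarse algebraic assembly map would be an isomorphism for every locally compact metric space. A genuine proof must therefore replace the naive shift by a scale-dependent truncated swindle whose truncation height tends to infinity with $d$, and arrange compatibility with the inclusions $P_d(X) \hookrightarrow P_{d'}(X)$; engineering such a truncation is the hard step, and is exactly what the remainder of the paper accomplishes for box spaces of residually finite groups via the asymptotically faithful cover structure.
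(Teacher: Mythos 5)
This statement is labeled a \emph{conjecture} in the paper, and the paper does not prove it in general; it only establishes it for box spaces of residually finite groups whose Farrell--Jones assembly map with coefficients is an isomorphism (Theorem~\ref{thm:reduction_for_residually_finite_groups}). You correctly recognize this, and your diagnosis of why the naive Eilenberg swindle $F := \bigoplus_n F^n$ fails is exactly right: Property~\ref{c_controlled} is preserved by a single shift $F^n$ but destroyed by the infinite sum, because a uniformly large $n$ pushes the uncontrolled low-$\bbN$ part of $\varphi$ to arbitrary heights. This is indeed the obstruction that makes the statement a conjecture rather than a theorem, and it is the reason the paper keeps a separate category $\calO^\lf_0$ where the swindle would work. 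Your first paragraph — identifying $\alpha_c$ at scale $d$ with the connecting homomorphism of the Karoubi filtration $(\calO^\lf(P_d(X);\calA),\,\calT^\lf(P_d(X);\calA))$, so that vanishing of $\colim_d K_*(\calO^\lf(P_d(X);\calA))$ suffices — matches the paper's own footnote verbatim.

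Where you go astray is in the final sentence: the paper does \emph{not} repair the swindle by a scale-dependent truncation. No truncated swindle appears anywhere. Instead, the paper's route (Corollary~\ref{cor:third_term} and Section~4) first replaces the sufficient condition $\colim_d K_*(\calO^\lf(P_d(X);\calA)) = 0$ by the \emph{equivalent} condition $\colim_d K_*(\calO^\gg(P_d(X);\calA)) = 0$, where $\calO^\gg = \calO^\lf/\calO$; note that you ask for vanishing at every fixed $d$, which is strictly stronger than what is needed or proved. Then Lemma~\ref{lem:isomorphism_to_equivariant} identifies $\calO^\gg(\bigsqcup_n B_n; \calA_n)$ with the quotient $\tfrac{\prod^\bd_n}{\bigoplus_n}\calO_{G_n}(E_n;\calA_n)$ using asymptotic faithfulness of the covers, Lemmas~\ref{lem:induction} and~\ref{lem:products_and_coefficients} convert this into a single equivariant controlled category $\calO_G(P_d(G);\calA_G)$ over the group $G$, and the desired vanishing becomes precisely the statement that the Farrell--Jones assembly map for $G$ with coefficients in $\calA_G$ is an isomorphism. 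So the hard content is an equivariant induction/untwisting and an appeal to the Farrell--Jones conjecture, not a refined swindle. Your proposal correctly finds the wall; it misidentifies the door the paper walks through.
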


A conjecture like this is typically proven by showing that the third term in a long exact sequence vanishes.
In our case, for instance, we could prove\footnote{This proof uses the long exact sequence for the Karoubi filtration $\big(\calO^\lf, \calT^\lf\big)$ and the fact that directed colimits are exact. Cf.\ Corollary \ref{cor:third_term}, which is very similar.} the conjecture by showing 
$$\colim_{d \geq 0} K_*\big(\calO^\lf(P_d(X); \calA)\big) = 0.$$
This is exactly what Ramras, Tessera and Yu do in \cite{Ramras.2018}, where they prove this conjecture for spaces $X$ of finite decomposition complexity (cf. \cite[Theorem 6.4]{Ramras.2018}).
	\section{The Bounded Isomorphism Conjecture for Asymptotically Faithful Covers}
\label{sec:the_conjecture}

The goal for this section is to state Conjecture \ref{conj:main_conjecture} in an equivalent form that we are able to prove.
The idea is that the problem can be lifted along a covering and solved in a more familiar setting.

\subsection{$R$-Metric Covers}

Let us recall some basics concerning $R$-metric covers from \cite{Willett.2012a}.
Definitions \ref{def:R_metric_cover} and \ref{def:asymptotically_faithful} can be found in \cite[Section 2]{Willett.2012a}.

\begin{definition}
	\label{def:R_metric_cover}
	Let $(X,d)$, $(Y,d')$ be metric spaces, $p: X \rightarrow Y$ a surjective map and $R > 0$. The map $p$ is called an \emph{$R$-metric cover} if, for all $x \in X$, the map $p: X \rightarrow Y$ restricts to an isometry $p|_{B_R(x)}: B_R(x) \rightarrow B_R(p(x))$. We call $p: X \rightarrow Y$ a \emph{metric cover} to imply the existence of an $R > 0$ such that $p$ is an $R$-metric cover.
\end{definition}

\begin{lemma}
\label{lem:R_translative}
Let $p: X \rightarrow Y$ be a connected $R$-metric cover and $G = \Aut(p)$ its deck transformation group. The action $G \curvearrowright X$ is \emph{$R$-translative}, i.e.\
$$\forall x \in X, g \in G \smallsetminus \{1\}: d(x,gx) \geq R.$$
\end{lemma}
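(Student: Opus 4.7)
The plan is to argue by contradiction using a standard covering-space style connectedness argument: I assume $g \in G \smallsetminus \{1\}$ and $x \in X$ satisfy $d(x,gx) < R$, produce a fixed point of $g$ from this, and then use connectedness of $X$ to promote the fixed-point set to all of $X$, contradicting $g \neq 1$.

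The first step is to produce a fixed point. Since $d(x,gx) < R$ we have $gx \in B_R(x)$, and $p \comp g = p$ gives $p(gx) = p(x)$. But $p|_{B_R(x)}$ is an isometry, hence injective, so $gx = x$.

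Next I promote this to $g = \id_X$. Set $F := \{y \in X \where gy = y\}$. This set is non-empty by the previous step, and closed by continuity of $g$ (deck transformations being homeomorphisms). For openness, take $y \in F$; continuity of $g$ at $y$ yields some $\delta \in (0,R]$ with $g(B_\delta(y)) \subseteq B_R(y)$. For any $y' \in B_\delta(y)$, both $y'$ and $gy'$ lie in $B_R(y)$ with $p(y') = p(gy')$, so injectivity of $p|_{B_R(y)}$ forces $gy' = y'$. Thus $B_\delta(y) \subseteq F$. Connectedness of $X$ then forces $F = X$, giving $g = \id_X$, a contradiction.

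I do not anticipate any serious obstacle. The one subtlety worth flagging is that the openness step can be carried out using only continuity of $g$ to push a small ball into $B_R(y)$, which avoids the temptation to first prove that every deck transformation is a global isometry of $X$; that latter claim is local from the cover property but does not automatically globalise in arbitrary (non-geodesic) metric spaces, and fortunately we do not need it here.
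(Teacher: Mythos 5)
Your argument is correct, and its crucial first step coincides with the paper's: for $x$ and $gx$ within distance $R$, apply the isometry (or just injectivity) of $p$ on the $R$-ball together with $p \circ g = p$ to force $gx = x$. The two proofs diverge only in how they pass from a single fixed point to $g = \id$. The paper simply invokes ``$G \curvearrowright X$ is free'' as a known fact about deck transformations of connected covers, whereas you prove it in the metric-cover setting via the open--closed argument on the fixed-point set $F = \{y : gy = y\}$. Your route is more self-contained and avoids leaning on topological covering-space theory, which is appropriate here since an $R$-metric cover is not literally a topological covering by definition; the cost is a slightly longer proof, and the requirement (which you reasonably take for granted) that elements of $\Aut(p)$ be continuous. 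Your flagged subtlety about not needing every deck transformation to be a global isometry of $X$ is a fair and worthwhile observation.
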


\begin{proof}
For all $x \in X$ and $g \in G$ we have isometries
$$B_R(x) \rightarrow B_R(p(x)) = B_R(p(gx)) \leftarrow B_R(gx).$$
Assume $d(x,gx) < R$, i.e.\ $x \in B_R(gx)$. We have
$$0 = d(p(x), p(x)) = d(p(x), p(gx)) = d(x, gx)$$
and therefore $x = gx$, which implies $g = 1$, since $G \curvearrowright X$ is free.
\end{proof}

\begin{definition}
	\label{def:asymptotically_faithful}
	Let $p_n : X_n \rightarrow Y_n$ be a sequence of metric covers. We say it is \emph{asymptotically faithful} if there is a sequence $R_n > 0$ with $R_n \uset{n \rightarrow \infty}{\longrightarrow} \infty$ such that $p_n$ is an $R_n$-metric cover.
\end{definition}

\begin{lemma} 
\label{lem:residually_finite_gives_sequence}
Let $G$ be a finitely generated residually finite group with a finite generating set $S$. This set induces a generating set $[S]$ for $G/H_n$. Then $G$ yields an asymptotically faithful sequence $p_n: (G, d_S) \rightarrow (G/H_n, d_{[S]})$ of metric covers.
\end{lemma}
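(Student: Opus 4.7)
The plan is to identify an ``injectivity radius'' $r_n$ of each cover $p_n$, show it diverges, and then verify that $p_n$ restricts to an isometry on every ball of radius $R_n := r_n/4$. Both word metrics $d_S$ and $d_{[S]}$ are left-invariant, and $p_n$ is equivariant for left multiplication (modulo $H_n$ on the target), so it suffices to check the isometry condition at $1 \in G$. The key quantity is
$$r_n := \min\{|h|_S : h \in H_n \setminus \{1\}\} \in \bbN \cup \{\infty\}.$$

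The first step is to show $r_n \to \infty$. Fix $N$. Since $G$ is finitely generated, the ball $B_N(1) \subseteq G$ is finite. Assuming without loss of generality that $(H_n)$ is nested (otherwise replace $H_n$ by $\bigcap_{k \leq n} H_k$, which does not alter the relevant behaviour of the box space), each nontrivial $g \in B_N(1)$ lies outside $\bigcap_n H_n = \{1\}$, hence outside $H_n$ for all sufficiently large $n$. Finiteness of $B_N(1)$ then gives $B_N(1) \cap H_n = \{1\}$ for $n$ large, i.e.\ $r_n > N$ eventually. This divergence is really the only nonroutine step.

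Finally, I verify that $p_n|_{B_{R_n}(1)}$ is an isometry onto $B_{R_n}([1])$ with $R_n := r_n/4$. Surjectivity is immediate by lifting a minimal-length representative. The map is always $1$-Lipschitz, so $d_{[S]}([y],[z]) \leq d_S(y,z)$. Conversely, for $y,z \in B_{R_n}(1)$ let $k := d_{[S]}([y],[z])$ and lift a minimal word representing $[y^{-1}z]$ to $w \in G$ with $|w|_S = k$ and $w \in y^{-1}z\,H_n$. The discrepancy $h := w^{-1}y^{-1}z \in H_n$ has length $|h|_S \leq k + |y^{-1}z|_S < 4R_n = r_n$, which forces $h = 1$. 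Hence $y^{-1}z = w$ and $d_S(y,z) = k = d_{[S]}([y],[z])$; the case $k = 0$ gives injectivity. Since $R_n \to \infty$, the sequence $(p_n)$ is asymptotically faithful.
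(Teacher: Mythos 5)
Your proof is correct and follows essentially the same strategy as the paper's: use residual finiteness together with finite generation to show that $H_n$ eventually misses any fixed ball, then deduce that $p_n$ restricts to an isometry on balls of a radius tending to infinity. If anything you are slightly more careful than the paper's argument, which only checks that $|g|_S = |[g]|_{[S]}$ for $|g|_S \leq R$ (i.e.\ preservation of distance to the origin) rather than verifying pairwise distances within a ball as the definition of an $R$-metric cover strictly requires; your explicit injectivity radius $r_n$ and the choice $R_n = r_n/4$ handle this cleanly, and the factor-of-two difference is immaterial for asymptotic faithfulness.
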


\begin{proof}
Consider the sequence  $(H_n)_{n \in \bbN}$ of finite index normal subgroups with $H_{n+1} \subseteq H_n$ and $\bigcap_{n \in \bbN} H_i = \{1\}$. 
The trivial intersection implies that for every $R > 0$ and $n \gg R$, the set $S^{2R} = B_{2R}(1)$ has no intersection with $H_n$.
Let $n \gg R$ with $B_{2R}(1) \cap H_n = \emptyset$ and let $g \in G$ with $|g|_G \leq R$.
Let $[g] = [s_1] \cdots [s_m]$ in $G/H_n$ with $m$ minimal, i.e. $m = |[g]|_{[S]} \leq |g|_S \leq R$.
By definition, we have $g = s_1 \cdots s_n \cdot h$ for some $h \in H_n$. The assumption implies that $|h|_G > 2R$ if $h \neq 1$ and in that case the reverse triangle inequality gives
$$|g|_G \geq |h|_G - |s_1 \cdots s_m|_G > 2R - R = R.$$
As this is a contradiction to $|g|_G \leq R$, we conclude that $g = s_1\cdots s_m$ and therefore $|g|_S = |[g]|_{[S]}$.
Thus, $p_n: G \rightarrow G/H_n$ is an $R$-metric cover.
\end{proof}

\begin{lemma}
	\label{lem:metric_cover_gives_metric_cover_after_rips}
	Let $X$ and $Y$ be graphs and $p: X \rightarrow Y$ an $R$-metric cover. If $R \geq 3d$, then $p_*: P_d(X) \rightarrow P_d(Y)$ is an $(\lfloor \frac{R}{d} \rfloor - 1)$-metric cover.
\end{lemma}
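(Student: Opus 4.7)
The plan is to establish the metric cover property at the level of vertices and then transfer it to the full simplicial complex. Set $N := \lfloor R/d \rfloor - 1$ and fix a vertex $v \in X$; I need to show that $p_*$ restricts to a bijective isometry $B_N(v) \to B_N(p(v))$ where both Rips complexes carry the graph metric on their $1$-skeleta (vertices being joined by an edge whenever they lie within $X$- or $Y$-distance $d$). Since simplices in $P_d$ are determined by their vertex sets, and the lifting argument below shows that any vertex set forming a simplex near $p(v)$ has a unique lift to a vertex set near $v$, the general statement follows from the vertex-level one.

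The first key step is path lifting. Given an edge path $p(v) = y_0, y_1, \dots, y_k$ in $P_d(Y)$ with $k \leq N$, I lift inductively: assuming $y_{i-1}$ has been lifted to $x_{i-1} \in X$ with $d_X(v, x_{i-1}) \leq (i-1)d$, since $p$ restricts to an isometry $B_R(x_{i-1}) \to B_R(y_{i-1})$ and $d_Y(y_{i-1}, y_i) \leq d \leq R$, there is a unique $x_i \in B_R(x_{i-1})$ with $p(x_i) = y_i$ and $d_X(x_{i-1}, x_i) \leq d$. The crucial bookkeeping is $d_X(v, x_i) \leq id \leq Nd \leq R - d$, so all lifts stay strictly inside $B_R(v)$, where $p$ is injective.

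The second key step converts this into the isometry claim. Non-expansion is automatic: any edge in $P_d(X)$ with both endpoints in $B_R(v)$ maps to an edge in $P_d(Y)$ since $p$ does not increase distances on $R$-balls. For the reverse inequality, take $u$ with $d_{P_d(X)}(v, u) \leq N$, set $k := d_{P_d(Y)}(p(v), p(u)) \leq N$, and lift a $P_d(Y)$-geodesic from $p(v)$ to $p(u)$ to an edge path of length $k$ in $P_d(X)$ starting at $v$ and ending at some $\tilde u$. Both $u$ and $\tilde u$ lie in $B_R(v) \cap X$ and share the image $p(u) = p(\tilde u)$, so injectivity of $p$ on $B_R(v)$ forces $u = \tilde u$ and gives $d_{P_d(X)}(v, u) \leq k$. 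Surjectivity of $p_*$ onto $B_N(p(v))$ is immediate from lifting geodesics.

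The main obstacle is keeping the constants tight and compatible with the floor function. One needs $Nd + d \leq R$ so that after lifting a path of $N$ edges from $v$ a further edge still keeps the endpoints inside $B_R(v)$; the choice $N = \lfloor R/d \rfloor - 1$ yields $(N+1)d \leq R$, exactly what is required. The hypothesis $R \geq 3d$ merely ensures $N \geq 2$ so that the resulting cover parameter is nontrivial.
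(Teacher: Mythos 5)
Your argument is essentially the same as the paper's: establish a vertex-level isometry of Rips balls using the local isometry of $p$ on $X$-balls of radius $R$, then drop by one to pass to the geometric realization. Where the paper makes a single direct observation (a vertex at $P_d$-distance $\leq r := \lfloor R/d\rfloor$ from $x$ is at $X$-distance $\leq rd \leq R$, so $p$ is already an isometry near it), you achieve the same via an explicit inductive path-lifting scheme; that buys you a slightly more hands-on verification of injectivity at the cost of length.

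Two small issues worth tightening. First, the opening reduction is off by one: to obtain an $N$-metric cover of the \emph{geometric realizations} you need the vertex-level isometry on balls of radius $N+1$ (any point of $P_d(X)$ lies within distance $1$ of a vertex, so $B_N(x)\subseteq B_{N+1}(v)$). Your closing paragraph implicitly builds in this extra edge via the requirement $(N+1)d\leq R$, but the proof body only lifts paths of length $\leq N$; you should run the lifting one step further to match. Second, Definition \ref{def:R_metric_cover} requires $p_*$ to be \emph{globally} surjective, not only surjective onto balls $B_N(p(v))$; this is quick (lift each simplex of $P_d(Y)$ through some preimage of a vertex, as the paper does in its first paragraph) but should be stated. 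Neither issue is a serious gap — the argument is sound once these are made explicit.
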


\begin{proof}
	First we show that $p_*$ is surjective as a simplicial map. Let $\sigma \subset P_d(Y)$ be a simplex, i.e.\ $\diam(\sigma) \leq d \leq R$. Since $p$ is surjective we can pick $x \in X$ with $p(x) = y$ for some $y \in \sigma$. Now, $p|_{B_d(x)} : B_d(x) \rightarrow B_d(y)$ is an isometry, therefore $\sigma' := p|_{B_d(x)}^{-1}(\sigma)$ has diameter $\diam(\sigma') = \diam(\sigma) \leq d$ and thus is a preimage of $p_*$.
	
	Let $x \in X$ be a vertex. Any vertex $x' \in X$ within distance $r := \lfloor\frac{R}{d}\rfloor$ of $x$ in $P_d(X)$ has distance at most $R$ to $x$ in $X$. Hence, $p_*|_{B_{r}(x)}$ is an isometry. The claim follows from the fact that any point in a simplicial complex lies within distance 1 of a vertex.
\end{proof}

\begin{corollary}
	\label{cor:Rips_complex_respects_asymptotically_faithful_covers}
	Let $d \geq 1$ and $p_n: E_n \rightarrow B_n$ be an asymptotically faithful sequence of covers of graphs. Then $(p_n)_*: P_d(E_n) \rightarrow P_d(B_n)$ is an asymptotically faithful sequence of covers.
\end{corollary}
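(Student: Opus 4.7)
The plan is to apply Lemma \ref{lem:metric_cover_gives_metric_cover_after_rips} termwise to the sequence $p_n$ and verify that the resulting sequence of metric cover constants still tends to infinity.

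By the definition of asymptotic faithfulness, there is a sequence $R_n > 0$ with $R_n \to \infty$ such that each $p_n$ is an $R_n$-metric cover. Since $d \geq 1$ is fixed and $R_n \to \infty$, there is some $n_0$ such that $R_n \geq 3d$ for all $n \geq n_0$. For these $n$, Lemma \ref{lem:metric_cover_gives_metric_cover_after_rips} applies and tells us that
$$(p_n)_*: P_d(E_n) \to P_d(B_n)$$
is an $S_n$-metric cover with $S_n := \lfloor R_n / d \rfloor - 1$. For $n < n_0$ we may, for instance, simply take $S_n := 1$ (using that any surjective $R$-metric cover is in particular a $1$-metric cover when $R \geq 1$, which we can always arrange by shrinking $R_n$ if necessary).

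Finally, since $R_n \to \infty$ and $d$ is a fixed positive integer, $S_n = \lfloor R_n/d \rfloor - 1 \to \infty$, so $(p_n)_*$ is an asymptotically faithful sequence of covers.

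The argument is essentially bookkeeping; the only mild subtlety is making sure that the hypothesis $R \geq 3d$ of Lemma \ref{lem:metric_cover_gives_metric_cover_after_rips} is eventually satisfied (which it is, since $R_n \to \infty$) and that finitely many initial terms of the sequence do not spoil asymptotic faithfulness (which they never do, as the condition is asymptotic in $n$). No genuine obstacle arises.
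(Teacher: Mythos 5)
The paper offers no proof for this corollary; the intended argument is exactly the termwise application of Lemma~\ref{lem:metric_cover_gives_metric_cover_after_rips} you give, so your approach matches.

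One small quibble with your treatment of the initial terms $n < n_0$: the parenthetical justification (``any surjective $R$-metric cover is a 1-metric cover when $R\geq 1$'') concerns $p_n$, not $(p_n)_*$, and does not show that $(p_n)_*$ is a metric cover. In fact $(p_n)_*$ need not even be surjective when $R_n < d$: take $B_n$ a cycle of length $4$, $E_n = \bbZ$ its universal cover, and $d=2$; the whole cycle is a $3$-simplex in $P_2(B_n)$ but has no lift of diameter $\leq 2$ in $P_2(\bbZ)$. The clean way to dispose of the finitely many small $n$ is simply to note that asymptotic faithfulness is a tail condition, so one may discard or reindex the first $n_0$ terms of the sequence; after that your argument is complete.
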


\subsection{Preparations}

\begin{notation}
	We abbreviate \mbox{$\calO^\gg := \calO^\lf / \calO$}, i.e.\ given a space $X$ and an additive category $\calA$, we write $\calO^\gg(X; \calA)$ for the category $\bigslant{\calO^\lf(X; \calA)}{\calO(X; \calA)}$.
	Similarly, given a sequence of $\varepsilon$-filtered categories $\calA_n$, we abbreviate the quotient $\bigslant{\prod^\bd_n \calA_n}{\bigoplus_n \calA_n}$ as $\frac{\prod^\bd_n}{\bigoplus_n} \calA_n$.
\end{notation}

\begin{lemma}
	\label{lem:isomorphism_to_equivariant}
	Let $p_n: E_n \rightarrow B_n$ be an asymptotically faithful sequence of covers with deck transformation groups $G_n$, where every $B_n$ is compact. 
	The functor
	$$\Phi : \frac{\prod^\bd_{n \in \bbN}}{\bigoplus_{n \in \bbN}} \calO_{G_n}(E_n; \calA_n) \rightarrow \calO^\ls\left(\coprod_{n \in \bbN} B_n; \calA_n\right)$$
	induced by the $p_n$ (see proof for the precise definition) is an equivalence of categories.
\end{lemma}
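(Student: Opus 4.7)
The plan is to construct $\Phi$ by pushing the data down along the covers $p_n$, and to show $\Phi$ is an equivalence by exhibiting an essentially inverse assignment built from the same covers, with asymptotic faithfulness guaranteeing that the lifts retain their control bounds. First I would define $\Phi$ on objects by sending $((S_n,\pi_n,M_n))_n$ to the triple $\bigl(\coprod_n S_n/G_n,\,\bar\pi,\,\bar M\bigr)$, where $\bar\pi$ is induced by $p_n\times\id_\bbN$ on each component and $\bar M$ is defined after choosing $G_n$-orbit representatives. The local finiteness condition on the target follows from the $G_n$-cocompactness of $p_n^{-1}(K)$ for compact $K\subseteq B_n$, together with condition~(\ref{c_locally_compact}) upstairs. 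On morphisms, I would descend a uniformly $\varepsilon$-controlled tuple $(\varphi_n)_n$ via $(\Phi\varphi)^{[s]}_{[s']} := \sum_{g\in G_n}\varphi^s_{gs'}$ on fixed representatives; by Lemma~\ref{lem:R_translative} combined with $R_n\to\infty$, this sum has at most one nonzero term as soon as $R_n>\varepsilon$, and on the finitely many smaller $n$ the sum is finite by row/column finiteness.

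Next I would verify that $\Phi$ descends to the indicated quotients. A morphism supported on finitely many $n$ maps under $\Phi$ to one whose $X$-image lies in a finite union of the compact spaces $B_n$, hence factors through the subcategory $\calO(\coprod_n B_n;\calA_n)$ and is killed in $\calO^\ls$. Thus $\Phi$ factors to a functor $\frac{\prod^\bd_n}{\bigoplus_n}\calO_{G_n}(E_n;\calA_n) \to \calO^\ls(\coprod_n B_n;\calA_n)$.

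For essential surjectivity I would construct, given $(T,\rho,N)$ on the right, the pullback $S_n := T_n\times_{B_n}E_n$ where $T_n = \rho_X^{-1}(B_n)$, equipped with the free $G_n$-action on the second factor. The maps $\pi_n(t,e):=(e,\rho_\bbN(t))$ and $M_n(t,e):=N(t)$ (extended $G_n$-equivariantly on orbits) produce an object in $\calO_{G_n}(E_n;\calA_n)$, and a uniform $\alpha$-control bound downstairs translates into the bounded-product condition on the tuple $((S_n,\pi_n,M_n))_n$.

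The central step is fully faithfulness, and this is where asymptotic faithfulness is used decisively. Given an $\alpha$-controlled representative of a morphism $\bar\psi$ downstairs, I would pick $N$ with $R_n>\alpha$ for all $n\geq N$. For such $n$, the isometry $p_n|_{B_\alpha(e)}:B_\alpha(e)\to B_\alpha(p_n(e))$ uniquely specifies the target $e'\in E_n$ of each matrix entry of the lift, so $\bar\psi$ lifts uniquely to an $\alpha$-controlled $G_n$-equivariant morphism on $E_n$ for every $n\geq N$; on the finitely many $n<N$ we lift arbitrarily, and the resulting ambiguity lives in $\bigoplus_n$ and is killed in the source. The hard part will be verifying that the two kernels, $\bigoplus_n$ on the source and $\calO$ on the target, match up exactly so that $\Phi$ becomes simultaneously full and faithful on the quotient categories; concretely, this reduces to showing that a tuple maps to zero in $\calO^\ls$ precisely when it already lay in $\bigoplus_n$ upstairs.
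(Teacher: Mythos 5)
Your construction of $\Phi$, the essential surjectivity argument via the pullback $T_n\times_{B_n}E_n$ (equivalent to the paper's $G_n\times S_n$ with a chosen lift $x_s$), and the fullness argument via uniqueness of $\alpha$-controlled lifts all track the paper's proof closely and are correct as far as they go. The $R$-translativity / metric-cover input enters in the same place.

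However, you explicitly flag faithfulness --- ``showing that a tuple maps to zero in $\calO^\ls$ precisely when it already lay in $\bigoplus_n$'' --- as the ``hard part'' and then do not carry it out, and this is in fact where the paper does its real work. Your uniqueness-of-lift argument shows: if $\Phi(\varphi)$ and $\Phi(\varphi')$ \emph{agree as actual morphisms} in $\calO^\lf(\coprod_n B_n;\calA_n)$ and both $\varphi,\varphi'$ are $\alpha$-controlled, then $\varphi_n=\varphi'_n$ for $n$ with $R_n$ large. But faithfulness requires something weaker in the hypothesis: $[\Phi(\varphi)]=0$ only means $\Phi(\varphi)$ \emph{factors through} an object $(S,\pi,M)$ of $\calO(\coprod_n B_n;\calA_n)$, not that it vanishes. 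The missing observation is that such a factorization forces the matrix entries of $\Phi(\varphi)$ over $B_n$ to vanish for all $n$ beyond the (finite) set of components meeting $\im(\pi_X)$: since the components $B_n$ are arbitrarily far apart, any bounded-control morphism has no entries between different components, so the factorization $\beta\circ\alpha$ through a compactly supported $(S,\pi,M)$ kills every entry over an unmet $B_n$. Only after that does the $2R$-metric-cover argument (only $h=e$ contributes to $\sum_h(\varphi_n)^s_{hs'}$) give $(\varphi_n)^s_{s'}=0$ for $n$ large, i.e.\ $\varphi\in\bigoplus_n$. You have the second half of this chain but not the first, so the proposal as written leaves the central step open.
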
 

\begin{proof}
We define $\Phi$ on objects via
$$(S_n, \pi_n, M_n)_n \mapsto \big(\coprod\nolimits_{n} S_n / G_n, \; \cop_{n} (p_n \times \id_\bbN) \comp \pi_n, \; \cop_{n} M_n\big)$$
and on morphisms via
$$(\varphi_n)_n \mapsto \widetilde \varphi \quad \text{ where } \quad \widetilde \varphi^{[s]}_{[s']} = \begin{cases}
\sum_{h \in G_n} (\varphi_n)^s_{hs'} & s, s' \in S_n \\
0 & \text{otherwise.}
\end{cases}$$
All of these definitions are well-defined because everything involved is $G_n$-equivariant and the sum is, indeed, finite. The latter is true because the rows of $\varphi_n$ are cofinite.
Note that because of the boundedness conditions, for every morphism $\varphi$ of $\calO(\coprod_{n \in \bbN} B_n; \calA_n)$, we have $\varphi_{s'}^s = 0$ if $s$ and $s'$ live over different components in $\coprod_{n \in \bbN} B_n$. 
This argument shows the fullness of $\Phi$: Over each component $E_n$, morphisms can be defined on an orbit and then be extended $G_n$-equivariantly.

\vspace{1ex}
It is easily seen that $\Phi$ is essentially surjective: Every object $(S, \pi, M) \in \calO^\lf(\coprod_{n \in \bbN} B_n; \calA_n)$ is isomorphic to the image of $(G_n \times S_n, \pi_n, M_n)_n$, where 
\begin{align*}
S_n &:= \pi_{\coprod_n B_n}^{-1}(B_n), \\
\pi_n(g,s) &:= (g.x_s, \pi_{\bbN}(s)) \quad \text{ for a fixed chosen $x_s \in p^{-1}(\pi(s)) \quad $ and } \\
M(g,s) &:= M(s).
\end{align*}
This is well-defined because $B_n$ is compact.

\vspace{1ex}
Now assume that $\Phi([\varphi]) = 0$ for a morphism $\varphi$ of $\prod_{n \in \bbN}^\bd \! \calO_{G_n}^\lf(E_n; \calA_n)$, i.e.\ that there is a diagram \vspace{-1em}
\begin{center}
\begin{tikzcd}[row sep = 1em, column sep = 1ex]
(\coprod_n S_n/G_n, \cop_n p_n \comp \pi_n, \cop_n M_n)_n \arrow[dr] \arrow[rr, "\sum_{h \in G_n} (\varphi_n)^-_{h-}"] && (\coprod_n S'_n/G_n, \cop_n p_n \comp \pi'_n, \cop_n M'_n)_n \\
& (S, \pi, M) \arrow[ur]
\end{tikzcd}
\end{center}
where $(S, \pi, M) \in \calO(\coprod_{n \in \bbN} B_n; \calA_n)$.

Choose $R > 0$ such that $\varphi$ is $R$-controlled. By asymptotic faithfulness, there is an $N_0 \in \bbN$ such that for all $n \geq N_0$ the map $p_n$ is a $2R$-metric cover. 
Also, there is an $N_1 \in \bbN$ such that $\im(\pi) \subseteq ( \coprod_{n \leq N_1} B_n) \times \bbN$. Set $N := \max \{N_0, N_1\}$.
Since the diagram commutes, we have
$$0 = \sum_{h \in G_n} (\varphi_n)^-_{h-} \quad \text{ for all } n \geq N.$$
Now let $n \geq N$ and assume $\varphi^s_{s'} \neq 0$ for some $s,s' \in S_n$. 
Then the control conditions assert that $d(\pi_{E_n}(s), \pi'_{E_n}(s')) < R$. Since $p_n$ is a $2R$-metric cover, the non-trivial elements $h \in G_n$ translate elements in $E_n$ over large distances. More precisely, by the triangle inequality,
$$d(\pi_{E_n}(s), \pi'_{E_n}(hs')) \geq \underset{\geq 2R \text{ (Lemma \ref{lem:R_translative})}}{\underbrace{d(\pi_{E_n}(s'), \pi'_{E_n}(hs'))}} - \underset{< R}{\underbrace{d(\pi_{E_n}(s),\pi'_{E_n}(s'))}} > R$$
and therefore $(\varphi_n)^s_{hs} = 0$ for all $h \neq e$. We conclude
$$0 = \sum_{h \in G_n} (\varphi_n)^s_{hs'} = (\varphi_n)^s_{s'}.$$
Thus, $\varphi_n$ must be trivial for all $n \geq N$ and $(\varphi_n)_n$ factors through an object in $\bigoplus_{n \in \bbN} \calO_{G_n}(E_n; \calA_n)$. This implies that $\Phi$ is faithful.
\end{proof}

\subsection{Equivalent Formulations of the Conjecture}

\begin{lemma}
	\label{lem:conjecture_reduce}
	The following map is an isomorphism.
	$$\colim_{d \geq 0} H_*\big(P_d(X); \calA\big) \rightarrow K_*\big(\calC(X; \calA)\big)$$
\end{lemma}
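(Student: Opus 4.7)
The plan is to derive the lemma from the Karoubi long exact sequence by reducing it to a vanishing statement about the \(K\)-theory of \(\calO(P_d(X); \calA)\) in the colimit.

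By Lemma~\ref{lem:karoubi_filtration}, the pair \((\calO(P_d(X); \calA), \calT(P_d(X); \calA))\) is a Karoubi filtration for every \(d \geq 0\), yielding a long exact sequence
\begin{align*}
\cdots \to K_{n+1}\bigl(\calO(P_d(X); \calA)\bigr) &\to K_{n+1}\bigl(\calD(P_d(X); \calA)\bigr) \\
&\xrightarrow{\partial} K_n\bigl(\calT(P_d(X); \calA)\bigr) \to K_n\bigl(\calO(P_d(X); \calA)\bigr) \to \cdots
\end{align*}
natural in \(d\). The functor ``shift \(\pi_\bbN\) to \(0\)'' gives an equivalence \(\calT(P_d(X); \calA) \simeq \calC(P_d(X); \calA)\) (the shifted identity provides a natural isomorphism since \(\pi_\bbN\) has finite image), and since \(X \hookrightarrow P_d(X)\) is a coarse equivalence we further obtain \(\calC(P_d(X); \calA) \simeq \calC(X; \calA)\). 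Under these identifications the boundary map \(\partial\) becomes precisely the map of the lemma. Because filtered colimits of abelian groups are exact, passing to \(\colim_{d \geq 0}\) preserves the exact sequence, and the target remains \(K_*(\calC(X; \calA))\).

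It therefore suffices to prove
\[
\colim_{d \geq 0} K_*\bigl(\calO(P_d(X); \calA)\bigr) = 0.
\]
My approach is via an Eilenberg swindle on the colimit category. The naive \(\bbN\)-shift \(F^n\) of Lemma~\ref{lem:O_is_trivial} does not yield a valid endofunctor of \(\calO(P_d(X); \calA)\) for non-trivial targets: the block-diagonal sum \(F = \bigoplus_n F^n\) preserves the \(P_d(X)\)-spread of morphisms at every \(\bbN\)-level and hence violates Property~\ref{c_controlled}. The colimit over \(d\) provides the missing room, since any morphism \(\varphi\) in \(\calO(P_d(X); \calA)\) has compact support \(K\) and finite \(Y\)-spread \(\alpha\), and after passing to \(\calO(P_{d'}(X); \calA)\) with \(d'\) large enough (depending on \(\alpha\) and \(\diam K\)) its support lies in a single simplex. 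Inside a simplex, the straight-line retraction \(r_\lambda\) toward a chosen barycenter is \((1-\lambda)\)-Lipschitz, so the modified swindle
\[
F^n \colon (S, (\pi_Y, \pi_\bbN), M) \mapsto \bigl(S, (r_{1-1/n} \comp \pi_Y, \pi_\bbN + n), M\bigr)
\]
shrinks the \(Y\)-spread of the \(n\)-th block by \(1/n\), and \(F = \bigoplus_n F^n\) then satisfies Property~\ref{c_controlled} uniformly.

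The main obstacle will be verifying that this modified swindle descends to an honest endofunctor of the colimit category \(\colim_d \calO(P_d(X); \calA)\): one must make functorial choices of simplices and basepoints on objects and check compatibility with the transition maps induced by the inclusions \(P_d(X) \hookrightarrow P_{d'}(X)\). Once the swindle is in place, the standard identity \(F \oplus \id \cong F\) forces \(K_*\) of the colimit to vanish, and the Karoubi long exact sequence then collapses to give the desired isomorphism \(\colim_d H_*(P_d(X); \calA) \xrightarrow{\cong} K_*(\calC(X; \calA))\).
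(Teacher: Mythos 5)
Your reduction is correct as far as it goes: by Lemma~\ref{lem:karoubi_filtration} and Remark~\ref{rem:assembly_map_vs_connecting_hom} the map in the lemma does identify with the connecting homomorphism of the $(\calO,\calT)$-Karoubi sequence after replacing $\calT(P_d(X);\calA)$ by $\calC(X;\calA)$, and since filtered colimits are exact it would indeed suffice to show $\colim_{d}K_*\bigl(\calO(P_d(X);\calA)\bigr)=0$. But this is a genuinely different and substantially heavier route than the paper's. The paper never touches $\calO$ here: it simply uses that $H_*(-;\calA)=K_{*+1}(\calD(-;\calA))$ is a homology theory with compact supports, hence commutes with directed colimits and is homotopy invariant, so $\colim_d H_*(P_d(X);\calA)\cong H_*\bigl(\colim_d P_d(X);\calA\bigr)\cong H_*(\pt;\calA)\cong K_*(\calA)$, and separately notes $K_*(\calC(X;\calA))\cong K_*(\calA)$ because every object of $\calC(X;\calA)$ has finite underlying set. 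Two routine identifications, no swindle.

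The gap in your version is the swindle itself. The retraction $r_{1-1/n}$ toward a barycenter is only defined inside a single simplex, but an object of $\calO(P_d(X);\calA)$ only has support in a compact \emph{subcomplex} of $P_d(X)$, not in a simplex. Your fix is to pass to $P_{d'}(X)$ with $d'$ chosen object-by-object so that the support lies in one simplex, and you then explicitly defer ``the main obstacle'' of showing that $F=\bigoplus_n F^n$ is a well-defined endofunctor of $\colim_d\calO(P_d(X);\calA)$ compatible with the transition maps. That is exactly where the content is, and it is not routine: a morphism $\varphi$ in the colimit category is realized at some level $d''$ unrelated to the levels $d'$ chosen for its source and target, the linear retraction toward a fixed basepoint is not a simplicial self-map of $P_{d'}(X)$, and you have not verified that $F(\varphi)$ satisfies Properties~\ref{c_rows_columns_finite}--\ref{c_controlled} or that the assignment respects composition. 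As written this is a program, not a proof. If you want to keep the long-exact-sequence framing, the clean way is to compare the $(\calO,\calT)$-sequence for $P_d(X)$ with the one for $\pt$ in the colimit, use that the $\calT$- and $\calD$-columns become isomorphic by exactly the two identifications the paper makes, and conclude $\colim_d K_*(\calO(P_d(X);\calA))\cong K_*(\calO(\pt;\calA))=0$ from Lemma~\ref{lem:O_is_trivial} via the five lemma --- no swindle on a colimit category is required.
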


\begin{proof} Consider the diagram
	\begin{center}
		\begin{tikzcd}
			\colim_{d \geq 0} H_*\big(P_d(X); \calA\big) \arrow[rr] \arrow[d, "\cong"] && K_*\big(\calC(X; \calA)\big) \arrow[d, "\cong"] \\
			H_*\big(\colim_{d \geq 0} P_d(X); \calA\big) \arrow[r, "\cong"] & H_*\big(\pt; \calA\big) \arrow[r, "\cong"] & K_*(\calA).
		\end{tikzcd}
	\end{center} \vspace{-2em}
\end{proof}

\begin{lemma}
	\label{lem:large_scale_assembly_map}
	Let $X$ be a locally compact metric space. 
	Conjecture \ref{conj:main_conjecture} is equivalent to
	$$\colim_{d \geq 0} K_*\big(\D^\ls(P_d(X); \calA)\big) \xrightarrow{\,\cong\,} K_{*-1}\big(\T^\ls(X; \calA)\big).$$
\end{lemma}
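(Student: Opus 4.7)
The plan is to relate Conjecture \ref{conj:main_conjecture} and the stated isomorphism through a ladder of two long exact sequences, using Lemma \ref{lem:conjecture_reduce} as the base case and finishing by the five lemma.

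First I would assemble a $3 \times 3$ grid of Karoubi filtrations from Lemma \ref{lem:karoubi_filtration}: the horizontal filtrations $(\O, \T)$ and $(\O^\lf, \T^\lf)$, together with the vertical filtrations $(\O^\lf, \O)$ and $(\T^\lf, \T)$. Iterated quotients give $\D := \O/\T$, $\D^\lf := \O^\lf/\T^\lf$, $\T^\ls := \T^\lf/\T$, $\O^\ls := \O^\lf/\O$, and $\D^\ls := \O^\ls/\T^\ls$ (which, by the standard commutativity of iterated Karoubi quotients, agrees with $\D^\lf/\D$). Every row and column of this grid is itself a Karoubi sequence, and so induces a long exact sequence in $K$-theory.

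Next, I would evaluate at $P_d(X)$, pass to the filtered colimit over $d$ (which preserves exactness), and use the coarse equivalence $X \hookrightarrow P_d(X)$ to identify $\colim_d K_*(\T^?(P_d X; \calA)) \cong K_*(\T^?(X; \calA))$ for $? \in \{-,\lf,\ls\}$ (exactly as stated in Conjecture \ref{conj:main_conjecture} for $? = \lf$). This yields a commutative ladder
$$
\begin{tikzcd}[column sep=1em, row sep=0.7em]
\colim_d K_n(\D(P_d X); \calA) \ar[r, "\partial_\D"] \ar[d] & K_{n-1}(\T(X); \calA) \ar[d] \\
\colim_d K_n(\D^\lf(P_d X); \calA) \ar[r, "\partial_{\D^\lf}"] \ar[d] & K_{n-1}(\T^\lf(X); \calA) \ar[d] \\
\colim_d K_n(\D^\ls(P_d X); \calA) \ar[r, "\partial_{\D^\ls}"] \ar[d] & K_{n-1}(\T^\ls(X); \calA) \ar[d] \\
\colim_d K_{n-1}(\D(P_d X); \calA) \ar[r, "\partial_\D"] & K_{n-2}(\T(X); \calA)
\end{tikzcd}
$$
continuing infinitely in both directions. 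The two columns are the long exact sequences of the Karoubi filtrations $(\D^\lf, \D)$ and $(\T^\lf, \T)$, while the horizontal arrows $\partial_{\D^?}$ are the connecting homomorphisms of the row Karoubi sequences $(\O^?, \T^?)$ — that is, the coarse assembly maps of Remark \ref{rem:assembly_map_vs_connecting_hom}.

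To conclude, Lemma \ref{lem:conjecture_reduce} asserts that $\partial_\D$ is an isomorphism in every degree, while Conjecture \ref{conj:main_conjecture} (via the equivalence $\calC^\lf \simeq \T^\lf$) is precisely the statement that $\partial_{\D^\lf}$ is an isomorphism in every degree. Applying the five lemma at the appropriate positions along the ladder, in both directions, yields the equivalence
\[
\partial_{\D^\lf} \text{ is iso for all } n \;\iff\; \partial_{\D^\ls} \text{ is iso for all } n,
\]
which is the content of the lemma. The hard part will be justifying that the right column $\D \to \D^\lf \to \D^\ls$ genuinely gives a long exact sequence in $K$-theory — equivalently, the $3 \times 3$-type compatibility of the four Karoubi filtrations, ensuring that the iterated quotient $\D^\ls$ is itself governed by a Karoubi sequence $\T^\ls \hookrightarrow \O^\ls \to \D^\ls$. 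Once this compatibility has been checked, the remainder of the argument is purely homological.
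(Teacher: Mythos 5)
Your proof is correct and follows the paper's argument closely: both set up the $3\times 3$ grid of Karoubi filtrations, pass to the colimit over $d$, use Lemma~\ref{lem:conjecture_reduce} for the top row, and close with the five lemma applied to the two-column ladder of connecting homomorphisms. The point you flag at the end --- compatibility of the iterated Karoubi quotients so that $\D \to \D^\lf \to \D^\gg$ yields a long exact $K$-theory sequence --- is indeed the only technical subtlety, and the paper likewise asserts it (noting $\D^\gg \simeq \D^\lf/\D \simeq \O^\gg/\T^\gg$) without giving further justification.
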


\begin{proof}
	The diagram 
	\begin{center}
		\begin{tikzcd}
			\T(X; \calA) \arrow[r]\arrow[d] & \O(X; \calA) \arrow[r]\arrow[d] & \D(X; \calA) \arrow[d] \\
			\T^\lf(X; \calA) \arrow[r]\arrow[d] & \O^\lf(X; \calA) \arrow[r]\arrow[d] & \D^\lf(X; \calA) \arrow[d] \\
			\T^\gg(X; \calA) \arrow[r] & \O^\gg(X; \calA) \arrow[r] & \D^\gg(X; \calA)
		\end{tikzcd}
	\end{center}
	induces a grid diagram in $K$-theory, where each row and column is a long exact Karoubi sequence, see Lemma \ref{lem:karoubi_filtration}. 
	Note that $\D^\gg(X; \calA)$ can either be defined as $\bigslant{\calD^\lf(X; \calA)}{\calD(X; \calA)}$ or as $\bigslant{\calO^\gg(X; \calA)}{\calT^\gg(X; \calA)}$.
	Both definitions yield equivalent categories.
	
	We are particularly interested in the following segment.
	\begin{center}
		\begin{tikzcd}[column sep=0em]
			& \dots \arrow[d] && \dots \arrow[d] \\
			H_*(X; \calA) \arrow[r, equals] & K_*\big(\calD(X; \calA)\big) \arrow[rr, "\partial"]\arrow[d] &\quad& K_{*}\big(\T(X; \calA)\big) \arrow[d] \\
			H_*^{\lf}(X; \calA) \arrow[r, equals] & K_*\big(\calD^\lf(X; \calA)\big) \arrow[rr, "\partial"]\arrow[d] && K_{*}\big(\T^\lf(X; \calA)\big) \arrow[d] \\
			&K_{*+1}\left(\D^\gg(X; \calA)\right) \arrow[rr, "\partial"] \arrow[d] && K_{*}\left(\T^\gg(X; \calA)\right) \arrow[d] \\
			& \dots && \dots
		\end{tikzcd}
	\end{center}
	Evidently, we can replace all $X$ by $P_d(X)$ and consider the colimit along $d$. (On the right hand side this has no effect since every flavour of $\calT$ is coarsely invariant.) Again, note that directed limits are exact.
	
	As $\colim_{d \geq 0} H_*\big(P_d(X); \calA\big) \rightarrow K_*\big(\calC(X; \calA)\big)$ (cf.\ upper row) is an isomorphism by Lemma~\ref{lem:conjecture_reduce}, the five-lemma tells us that the coarse assembly map (cf.\ middle row) is an isomorphism if and only if $\colim_{d \geq 0} K_*\big(\D^\ls(P_d(X); \calA)\big) \rightarrow K_{*-1}\big(\T^\ls(X; \calA)\big)$ (cf.\ lower row) is an isomorphism.
\end{proof}

\begin{corollary}
	\label{cor:third_term}
	Conjecture \ref{conj:main_conjecture} is equivalent to
	$$\colim_{d \geq 0} K_*\big(\O^\ls(P_d(X); \calA)\big) = 0.$$
\end{corollary}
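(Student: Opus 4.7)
The plan is to deduce the corollary directly from Lemma \ref{lem:large_scale_assembly_map} by a routine long exact sequence argument. By that lemma, Conjecture \ref{conj:main_conjecture} is equivalent to the assertion that the connecting homomorphism
$$ \partial : \colim_{d \geq 0} K_*\bigl(\D^\ls(P_d(X); \calA)\bigr) \to K_{*-1}\bigl(\T^\ls(X; \calA)\bigr) $$
is an isomorphism in every degree, so the task reduces to relating this condition to the vanishing of $\colim_d K_*(\O^\ls(P_d(X); \calA))$.

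For this I would invoke the long exact Karoubi sequence attached to the filtration $(\O^\ls(P_d(X); \calA), \T^\ls(P_d(X); \calA))$ with quotient $\D^\ls(P_d(X); \calA)$; this pair is precisely the bottom row of the grid of Karoubi filtrations already set up in the proof of Lemma \ref{lem:large_scale_assembly_map}. Passing to the directed colimit over $d$ preserves exactness, and since every flavour of $\calT$ is coarsely invariant the $\T^\ls$-term is unchanged by this colimit, yielding the exact sequence
$$ \cdots \to K_*\bigl(\T^\ls(X;\calA)\bigr) \to \colim_d K_*\bigl(\O^\ls(P_d(X);\calA)\bigr) \to \colim_d K_*\bigl(\D^\ls(P_d(X);\calA)\bigr) \xrightarrow{\partial} K_{*-1}\bigl(\T^\ls(X;\calA)\bigr) \to \cdots $$

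From here the equivalence is a textbook diagram chase in this colimit long exact sequence: if every connecting map $\partial$ is an isomorphism, then the two maps surrounding $\colim_d K_*(\O^\ls(P_d(X);\calA))$ are forced to be zero (one as the map following a surjection whose image fills the target, the other as the map preceding an injection), so by exactness the middle term vanishes; conversely, once this middle term is zero in every degree, exactness immediately makes each $\partial$ both injective and surjective. I do not foresee any genuine obstacle: Lemmas \ref{lem:karoubi_filtration} and \ref{lem:large_scale_assembly_map} absorb all of the substantial input, and the corollary really is a cosmetic reformulation of Conjecture \ref{conj:main_conjecture} via the Karoubi long exact sequence.
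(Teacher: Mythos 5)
Your proposal is correct and takes essentially the same approach as the paper: the paper's own (very terse) proof also applies $\colim_d$ to the Karoubi long exact sequence for $(\O^\ls,\T^\ls)$ and reads off the equivalence; you have simply spelled out the diagram chase explicitly.
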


\begin{proof}
	Consider the long exact sequence associated to the Karoubi filtration $\big(\calO^\ls, \calT^\ls\big)$ and apply $\colim_{d \geq 0}$. Note that directed colimits are exact. The resulting long exact sequence contains both the assembly map appearing in Lemma \ref{lem:large_scale_assembly_map} and the term $\colim_{d \geq 0} K_*\big(\O^\ls(P_d(X); \calA)\big)$.
\end{proof}

Consider an asymptotically faithful sequence of covers $p_n: E_n \rightarrow B_n$ with deck transformation groups $G_n$ and assume every $B_n$ to be compact.

\begin{proposition}
\label{prop:equivalent_conjecture}
Conjecture \ref{conj:main_conjecture} for $X = \bigsqcup_{n \in \bbN} B_n$ is equivalent to
$$\colim_{d \geq 0} K_*\bigg( \frac{\prod^\bd_{n\in \bbN}}{\bigoplus_{n \in \bbN}} \calO_{G_n}(P_d(E_n); \calA)\bigg) = 0.$$
\end{proposition}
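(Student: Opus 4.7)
The plan is to combine Corollary \ref{cor:third_term} with the equivalence established in Lemma \ref{lem:isomorphism_to_equivariant}, applied scale by scale to the Rips complex of the cover. By Corollary \ref{cor:third_term}, Conjecture \ref{conj:main_conjecture} for $X = \bigsqcup_n B_n$ is equivalent to the vanishing
$$\colim_{d \geq 0} K_*\big(\calO^\ls(P_d(X); \calA)\big) = 0,$$
so it suffices to identify each $K_*\big(\calO^\ls(P_d(X); \calA)\big)$ with the corresponding group on the left-hand side of the displayed equation in the proposition, in a way that is compatible with the scale-change maps.

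For this identification I first note that since the $B_n$ are the components of $X$, we have $P_d(X) = \coprod_n P_d(B_n)$ for every $d \geq 0$, and each $P_d(B_n)$ is compact because $B_n$ is. Next, Corollary \ref{cor:Rips_complex_respects_asymptotically_faithful_covers} guarantees that for every fixed $d \geq 1$ the induced maps $(p_n)_* \colon P_d(E_n) \to P_d(B_n)$ form another asymptotically faithful sequence of covers, whose deck transformation groups are still $G_n$, now acting by simplicial isometries. Lemma \ref{lem:isomorphism_to_equivariant}, applied with constant coefficient category $\calA_n = \calA$, therefore yields an equivalence of additive categories
$$\Phi_d \colon \frac{\prod^\bd_{n \in \bbN}}{\bigoplus_{n \in \bbN}} \calO_{G_n}(P_d(E_n); \calA) \xrightarrow{\;\simeq\;} \calO^\ls(P_d(X); \calA)$$
for every $d \geq 1$. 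Passing to $K$-theory turns each $\Phi_d$ into an isomorphism of graded abelian groups; taking the colimit along $d$ then transports the vanishing from Corollary \ref{cor:third_term} to the vanishing claimed by the proposition, and conversely.

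The only point that needs verification, and which I expect to be the sole mild obstacle, is that the family $(\Phi_d)_{d \geq 1}$ is natural in $d$, i.e.\ that it intertwines the scale-increasing functors induced by the simplicial inclusions $P_d \hookrightarrow P_{d'}$ for $d \leq d'$ on both sides. This is essentially automatic from the formulas used to define $\Phi_d$ in the proof of Lemma \ref{lem:isomorphism_to_equivariant}: on objects one only quotients the indexing $G_n$-set by $G_n$, and on morphisms one only post-composes the control data with the covering maps $p_n$ and sums over fibres. These operations commute strictly with the inclusions between Rips complexes, so the $\Phi_d$ assemble into a morphism of directed systems. Because directed colimits are exact (and in particular preserve isomorphisms), this naturality statement is enough to conclude the proof.
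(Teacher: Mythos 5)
Your proposal is correct and takes essentially the same route the paper does: invoke Corollary~\ref{cor:third_term} to reformulate the conjecture as a vanishing of $\colim_d K_*\big(\calO^\ls(P_d(X);\calA)\big)$, then apply Lemma~\ref{lem:isomorphism_to_equivariant} at each scale $d$ to identify $\calO^\ls(P_d(X);\calA)$ with the bounded-product category. You make explicit two steps the paper leaves implicit — namely that Corollary~\ref{cor:Rips_complex_respects_asymptotically_faithful_covers} is needed so that $(p_n)_*\colon P_d(E_n)\to P_d(B_n)$ remains an asymptotically faithful sequence of compact covers with the same deck groups, and that the equivalences $\Phi_d$ are natural in $d$ so the identification passes to the colimit — and both points are verified correctly.
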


\begin{proof}
This follows directly from Corollary \ref{cor:third_term} and Lemma \ref{lem:isomorphism_to_equivariant}.
\end{proof}

If we take a closer look at the statement of Proposition \ref{prop:equivalent_conjecture}, we realize that it allows for a slight, but useful, generalization.

\begin{proposition}
	\label{conj:main_technical_conjecture}
	Let $p_n: E_n \rightarrow B_n$ be an asymptotically faithful sequence of covers with deck transformation groups $G_n$ where every $B_n$ is compact.
	Then the following statements are equivalent.
	\begin{enumerate}
		\item $\colim_{d \geq 0} K_*\bigg( \frac{\prod^\bd_{n\in \bbN}}{\bigoplus_{n \in \bbN}} \calO_{G_n}(P_d(E_n); \calA)\bigg) = 0$ for all additive $G_n$-categories $\calA$.
		\item $\colim_{d \geq 0} K_*\bigg( \frac{\prod^\bd_{n\in \bbN}}{\bigoplus_{n \in \bbN}} \calO_{G_n}(P_d(E_n); \calA_n)\bigg) = 0$ for all sequences $\calA_n$ of additive $G_n$-categories.
	\end{enumerate}
	
\end{proposition}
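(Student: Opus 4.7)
The direction (2)$\Rightarrow$(1) is immediate: take the constant sequence $\calA_n := \calA$, equipped with the given $G_n$-action. The real content is (1)$\Rightarrow$(2), and the plan is to encode the whole sequence $(\calA_n)_n$ into a single additive category carrying compatible actions by every $G_n$, so that (2) will then appear as a retract of (1).

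Concretely, I would set $\calA := \bigoplus_{k \in \bbN} \calA_k$ (the direct sum of additive categories) and, for each $n$, equip it with a $G_n$-action by letting $G_n$ act on the $\calA_n$-summand via the given action and trivially on the other summands. Since these actions are built summand-wise they do not interfere with each other, so $\calA$ is a $G_n$-additive category for every $n$ simultaneously, and (1) applied to this $\calA$ gives
$$\colim_{d \geq 0} K_*\bigg(\frac{\prod_n^\bd}{\bigoplus_n} \calO_{G_n}(P_d(E_n); \calA)\bigg) = 0.$$

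For each $n$, the inclusion $\iota_n \colon \calA_n \hookrightarrow \calA$ and the projection $p_n \colon \calA \twoheadrightarrow \calA_n$ onto the $n$-th summand are $G_n$-equivariant additive functors with $p_n \comp \iota_n = \id_{\calA_n}$. Because both functors act only on the $\calA$-values of an object $(S, \pi, M)$ and entry-wise on morphism matrices, they leave the space/$\bbN$-coordinate data and all control conditions of Definition~\ref{def:controlled_categories} untouched; in particular they preserve $\varepsilon$-control. Assembling them over $n$ and descending to the quotient by $\bigoplus_n$ therefore yields well-defined additive functors
$$\frac{\prod_n^\bd}{\bigoplus_n} \calO_{G_n}(P_d(E_n); \calA_n) \xrightarrow{\;\iota\;} \frac{\prod_n^\bd}{\bigoplus_n} \calO_{G_n}(P_d(E_n); \calA) \xrightarrow{\;p\;} \frac{\prod_n^\bd}{\bigoplus_n} \calO_{G_n}(P_d(E_n); \calA_n)$$
with $p \comp \iota = \id$. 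The bounded product is respected because $\iota$ and $p$ do not alter the uniform $\varepsilon$-control bounds, and the $\bigoplus_n$-subcategory is respected because both functors act factor-wise, so the ``eventually zero'' condition is preserved.

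Applying $\colim_{d \geq 0}$ and $K_*$ converts this retraction into a split injection of graded abelian groups, exhibiting the $K$-group in~(2) as a retract of the $K$-group in~(1). The latter vanishes by hypothesis, hence so does the former, which proves~(2). I do not foresee a real obstacle: the argument reduces to the fibre-wise observation that $K_*(\iota_n)$ is split injective, together with the routine verification that the bounded product and the quotient by $\bigoplus_n$ commute with functors induced by object-wise additive maps.
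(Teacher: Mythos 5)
Your proof is correct and takes essentially the same approach as the paper: the paper also proves (1)$\Rightarrow$(2) by exhibiting a section/retract through $\frac{\prod^\bd_n}{\bigoplus_n}\calO_{G_n}\bigl(P_d(E_n); \bigoplus_k \calA_k\bigr)$ induced by the inclusions $\calA_n \hookrightarrow \bigoplus_k \calA_k$ and projections $\bigoplus_k \calA_k \twoheadrightarrow \calA_n$, then passes to $K$-theory and the colimit. You additionally spell out the choice of $G_n$-action on $\bigoplus_k \calA_k$ (acting on the $n$\th{} summand only, trivially elsewhere) and the compatibility checks, which the paper leaves implicit.
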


\begin{proof}
	Clearly, Statement 2 implies Statement 1. For the other direction, consider the following section/retract
	$$\frac{\prod^\bd_{n\in \bbN}}{\bigoplus_{n \in \bbN}} \calO_{G_n}(P_d(E_n); \calA_n) \rightleftarrows \frac{\prod^\bd_{n\in \bbN}}{\bigoplus_{n \in \bbN}} \calO_{G_n}\left(P_d(E_n); \bigoplus\nolimits_{k \in \bbN} \calA_k\right)$$
	induced by the inclusions $\calA_n \rightarrow \bigoplus_k \calA_k$ and projections $\bigoplus_k \calA_k \rightarrow \calA_n$.
	This induces a section/retract on $K$-theory and the colimit, which shows that Statement 1 implies Statement 2.
\end{proof}

	\section{Residually Finite Groups}

It seems natural to state the following conjecture.

\begin{conjecture}
	\label{cor:coarse_conjecture_for_residually_finite_groups}
	Conjecture \ref{conj:main_conjecture} is true for box spaces of residually finite groups.
\end{conjecture}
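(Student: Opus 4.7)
The plan is to reduce Conjecture \ref{cor:coarse_conjecture_for_residually_finite_groups} to a vanishing statement by means of the machinery of Section \ref{sec:the_conjecture}, and then to handle that vanishing by invoking the Farrell--Jones conjecture for $G$ with a carefully chosen coefficient category.

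First, by Lemma \ref{lem:residually_finite_gives_sequence} the projections $p_n\colon G \to G/H_n$ form an asymptotically faithful sequence of metric covers whose deck transformation groups are the $H_n$, and each quotient $G/H_n$ is finite. Applying Proposition \ref{conj:main_technical_conjecture} (and Corollary \ref{cor:Rips_complex_respects_asymptotically_faithful_covers} to preserve asymptotic faithfulness under Rips complexes), the conjecture reduces to
$$\colim_{d \geq 0} K_*\bigg(\frac{\prod^\bd_n}{\bigoplus_n} \calO_{H_n}\big(P_d(G); \calA\big)\bigg) = 0.$$
Next I would upgrade $H_n$-equivariance to $G$-equivariance via induction along the finite-index subgroup $H_n \leq G$: the assignment $(S, \pi, M) \mapsto (G \times_{H_n} S, \ldots)$ yields, for each $n$, an equivalence $\calO_{H_n}(P_d(G); \calA) \simeq \calO_G(P_d(G); \Ind_{H_n}^G \calA)$. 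Assembling these equivalences across $n$ and taking the bounded product modulo direct sum produces an additive $G$-category
$$\calA_G := \frac{\prod^\bd_n}{\bigoplus_n} \Ind_{H_n}^G \calA$$
together with a natural equivalence
$$\frac{\prod^\bd_n}{\bigoplus_n} \calO_{H_n}(P_d(G); \calA) \simeq \calO_G\big(P_d(G); \calA_G\big),$$
so the target vanishing becomes $\colim_d K_*\big(\calO_G(P_d(G); \calA_G)\big) = 0$.

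The final step is to derive this vanishing from the Farrell--Jones conjecture for $G$ with coefficients in $\calA_G$. Since $G$ acts freely on $P_d(G)$ and $\colim_d P_d(G)$ is contractible, the $G$-equivariant Karoubi long exact sequence for $(\calO_G, \calT_G)$, combined with Remark \ref{rem:assembly_map_vs_connecting_hom} and Lemma \ref{lem:O_is_trivial}, identifies $\colim_d K_*(\calO_G(P_d(G); \calA_G))$ with the cofibre (up to shift) of the Farrell--Jones assembly map for $G$ with coefficients $\calA_G$. Granting FJ with these coefficients, this cofibre vanishes, completing the proof.

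The hardest part is the middle step: one must verify that $\Ind_{H_n}^G$ transports $\varepsilon$-filtered structures compatibly, that the bounded product across $n$ inherits a well-defined $\varepsilon$-filtration on $\calA_G$, and that the resulting equivalence is natural in $d$ so as to survive the colimit. A secondary subtlety lies in the final step, since $P_d(G)$ typically models only $E_\Triv G$, not $E_\VCyc G$; the coefficient category $\calA_G$ has to be arranged so that the passage from $\Triv$ to $\VCyc$ only introduces contributions that are absorbed by the $\prod^\bd/\bigoplus$ quotient, which is where the specific construction built from the sequence $H_n$ is essential.
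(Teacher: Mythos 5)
Your overall strategy matches the paper's: use Lemma \ref{lem:residually_finite_gives_sequence} to get asymptotically faithful covers, reduce via Proposition \ref{prop:equivalent_conjecture} (your reference to Proposition \ref{conj:main_technical_conjecture} at this point is slightly off --- that proposition is the fixed-vs-varying coefficient equivalence, not the reduction itself) to a vanishing of $\colim_d K_*\big(\frac{\prod^\bd_n}{\bigoplus_n}\calO_{H_n}(P_d(G);\calA)\big)$, push the $H_n$-equivariance up to $G$-equivariance by induction (your $\Ind_{H_n}^G\calA$ is the paper's $\calC_0(G/H_n;\calA)$, cf.\ Lemma \ref{lem:induction}), pull the bounded product into the coefficient category via Lemma \ref{lem:products_and_coefficients}, and identify the result with the obstruction to a Farrell--Jones assembly map for $G$. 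All of that is as in the paper. Also note the paper does not prove Conjecture \ref{cor:coarse_conjecture_for_residually_finite_groups} unconditionally; it proves the biconditional of Theorem \ref{thm:reduction_for_residually_finite_groups}, which is what your conditional conclusion also gives.

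The genuine gap is in your final step, and it is not a secondary subtlety but the heart of the matter. You correctly observe that $\colim_d P_d(G)$ does \emph{not} model $E_\VCyc G$, so the vanishing you obtain is the obstruction to the assembly map for the family $\Fin$ (not $\Triv$: $G$ need not act freely on $P_d(G)$ when $G$ has torsion; $\colim_d P_d(G)$ is a model for $E_\Fin G$, see the remark after Theorem \ref{thm:reduction_for_residually_finite_groups}). But your suggested resolution --- that the passage from $\Fin$ to $\VCyc$ ``only introduces contributions that are absorbed by the $\prod^\bd/\bigoplus$ quotient'' --- is not the actual mechanism and would not by itself produce a proof. The paper's argument (Lemma \ref{lem:no_nil_terms}) is concrete and nontrivial: it invokes the transitivity principle to reduce to showing the $\Fin$-assembly map with coefficients $\calA_G$ is an isomorphism for every virtually cyclic subgroup $V\subseteq G$; it then shows $\calA_G$ restricted to $V$ is equivalent to a coefficient category $\calB_V$ of the same shape (using Lemma \ref{lem:normal_by_normal} and the second isomorphism theorem); it applies the equivalence of Theorem \ref{thm:reduction_for_residually_finite_groups} to $V$ itself; and finally it relies on Corollary \ref{cor:coarse_conjecture_for_virtually_cyclic_groups}, which verifies the coarse conjecture for box spaces of virtually cyclic residually finite groups via finite decomposition complexity and \cite[Theorem~6.4]{Ramras.2018}. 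Without this chain of reductions --- in particular without an independent verification in the virtually cyclic case --- your argument only establishes the $\Fin$-assembly statement, and the claimed equivalence with the usual Farrell--Jones conjecture (family $\VCyc$) does not follow.
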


In the following, we will prove that this conjecture is true for a residually finite group $G$ if the usual Farrell--Jones conjecture with coefficients is true for $G$, see Theorem \ref{thm:reduction_for_residually_finite_groups}.

\subsection{Virtually Cyclic Groups}

\begin{lemma}
Let $V$ be a residually finite group, $H_n \subseteq V$ the normal subgroups with $\bigcap_n H_n = \{1\}$, and $C \subseteq V$ a cyclic subgroup of finite index. Then the spaces
$$V/H_n \quad \text{ and } \quad C/(C \cap H_n)$$
are quasi isometric where the parameters only depend on $C$.
\end{lemma}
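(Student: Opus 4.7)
My plan is to apply the structure theorem for finitely generated virtually cyclic groups. Since $C$ is cyclic of finite index in $V$, the group $V$ fits into a short exact sequence $1 \to K \to V \xrightarrow{\pi} Q \to 1$ with $K$ a finite normal subgroup and $Q$ isomorphic to one of: a finite group, $\bbZ$, or the infinite dihedral group $D_\infty$. All the derived data (the order $|K|$, the isomorphism type of $Q$, the index $m = [Q:\pi(C)]$) is controlled by $[V:C]$ and so depends only on $C$.

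The key reduction is to project both spaces via $\pi$. The surjection $V/H_n \twoheadrightarrow Q/\pi(H_n)$ has all fibres of size at most $|K|$ and is therefore a $(1,|K|)$-quasi-isometry. The analogous map $C/(C \cap H_n) \twoheadrightarrow \pi(C)/(\pi(C) \cap \pi(H_n))$ has kernel $(C \cap H_n K)/(C \cap H_n)$, which embeds into $K/(K \cap H_n)$ via $c(C \cap H_n) \mapsto cH_n$, hence also has order at most $|K|$ and is likewise a $(1,|K|)$-quasi-isometry. It therefore suffices to compare the two targets up to uniform quasi-isometry constants.

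These targets are completely explicit case by case in $Q$. If $Q$ is finite, both have bounded diameter and the claim is trivial. If $Q = \bbZ$, then writing $\pi(H_n) = M_n\bbZ$ and $\pi(C) = m\bbZ$, the two targets are finite cyclic groups, i.e.\ cycles, of lengths $M_n$ and $M_n/\gcd(m, M_n)$, differing by a factor $\gcd(m, M_n) \leq m \leq [V:C]$. If $Q = D_\infty$, then only $\langle t^{M_n}\rangle = M_n\bbZ$ can occur as $\pi(H_n)$ (these are the finite-index normal subgroups of $D_\infty$) and $\pi(C) = m\bbZ$ lies inside the translation subgroup; one target is the dihedral group of order $2M_n$ and the other a cycle of length $M_n/\gcd(m, M_n)$, so the underlying cycle lengths differ by a factor at most $2[V:C]$. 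In every case, two finite cycles whose lengths differ by a bounded factor $n$ are uniformly $(n,n)$-quasi-isometric via the map $\bbZ/M \to \bbZ/nM$, $k \mapsto nk$, which embeds the shorter cycle as evenly-spaced points; composing the three resulting quasi-isometries yields the claim with constants depending only on $[V:C]$.

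The main obstacle is conceptual: the na\"ive inclusion $C/(C \cap H_n) \hookrightarrow V/H_n$ is \emph{not} a uniform quasi-isometry. Already for $V = \bbZ$, $C = 2\bbZ$, $H_n = 3^n\bbZ$, this inclusion is multiplication by $2$ on $\bbZ/3^n$, whose metric distortion grows like $3^n$. The quasi-isometry must therefore be assembled from the projections and cycle comparisons above rather than from any algebraically natural map between the two quotients; aside from this, the only work is bookkeeping the constants through the three-step composition.
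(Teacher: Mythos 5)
Your diagnosis of the ``naive inclusion'' is exactly right, and it is more than a side remark: the paper's own proof \emph{is} the naive inclusion. It identifies $C/(C\cap H_n)$ with $CH_n/H_n \subseteq V/H_n$ via the second isomorphism theorem and only checks that this subset is coarsely dense; it never verifies that this identification is a quasi-isometric embedding for the intrinsic word metric on $C/(C\cap H_n)$, and your $V=\bbZ$, $C=2\bbZ$, $H_n=3^n\bbZ$ computation shows that it need not be. You have, blindly, found a genuine gap in the paper's argument.

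However, your Step 3 has the same species of gap. You conclude that $C/(C\cap H_n) \twoheadrightarrow \pi(C)/(\pi(C)\cap\pi(H_n))$ is a $(1,|K|)$-quasi-isometry because its kernel has \emph{order} at most $|K|$, but a quasi-isometry bound needs the \emph{diameter} of the kernel in the word metric of $C/(C\cap H_n)$ to be bounded, and bounded cardinality does not give that. Step 2 is saved because the kernel of $V/H_n \to Q/\pi(H_n)$ is the image of the fixed finite set $K\subseteq V$, hence has diameter at most $\diam_V(K)$; the kernel $(C\cap KH_n)/(C\cap H_n)$ of Step 3 has no such set of uniformly short representatives. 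Concretely, take $V=\bbZ\times\bbZ/4$, $C=\langle(2,\bar 1)\rangle$, $K=0\times\bbZ/4$, and $H_n=\langle(3^n,\overline{(-1)^{n-1}})\rangle$, a nested sequence of finite-index normal subgroups with trivial intersection. Then $C/(C\cap H_n)\cong\bbZ/(4\cdot 3^n)$ while $\pi(C)/(\pi(C)\cap\pi(H_n))\cong\bbZ/3^n$, and your Step 3 map is the reduction wrapping a cycle of length $4\cdot 3^n$ four times around a cycle of length $3^n$; it sends $[0]$ and $[3^n]$, which are at distance $3^n$ in the domain, to the same point, so it admits no $n$-independent quasi-isometry constants.

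The lemma is nevertheless true, and the fix is to replace Step 3 by a cardinality comparison rather than a map. From $[V:H_n]=[V:CH_n]\cdot[C:C\cap H_n]$ and $[V:H_n]=[V:KH_n]\cdot[K:K\cap H_n]$ one gets
$$\frac{\big|Q/\pi(H_n)\big|}{\big|C/(C\cap H_n)\big|}=\frac{[V:KH_n]}{[C:C\cap H_n]}=\frac{[V:CH_n]}{[K:K\cap H_n]},$$
a ratio bounded above by $[V:C]$ and below by $1/|K|$. Hence the two cycle lengths agree up to a factor controlled by $V$ and $C$ alone, and your Step 2 together with your explicit cycle-comparison map of Step 4 already yields the uniform quasi-isometry; Step 3 can be deleted.
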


\begin{proof}
Using the {2\nd} isomorphism theorem, the space on the right hand side can be written as $CH_n / H_n$, which is a subspace of the left hand space. Now we have to find a global constant $K$ such that every point of $V/H_n$ is within distance $K$ to a point in $CH_n/H_n$. We have $[V : C] < \infty$.
Let $g_1, \dots, g_{[V: C]}$ be representatives for the cosets of $V/C$. Then a point $g \in V$ can be expressed as $xg_s$ for some $x \in C$ and $1 \leq s \leq [V: C]$ and we have
\begin{equation*}
d(g, CH_n) \leq d(g, C) \leq d(g,x) \leq |g_s| \leq \max\{|g_1|, \dots, |g_{[V: C]}|\} =: K < \infty. \qedhere
\end{equation*} 
\end{proof}

\begin{corollary}
\label{cor:coarse_conjecture_for_virtually_cyclic_groups}
The asymptotically faithful sequences associated to virtually cyclic, residually finite groups satisfy Conjecture \ref{conj:main_conjecture}.
\end{corollary}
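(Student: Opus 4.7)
The plan is to reduce the virtually cyclic case to the cyclic case using the preceding lemma, and then to invoke the coarse isomorphism conjecture for spaces of finite decomposition complexity.

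First, I would use the preceding lemma to note that $V/H_n$ and $C/(C \cap H_n)$ are quasi-isometric with constants that are independent of $n$, where $C \subseteq V$ denotes the cyclic subgroup of finite index. Since different components of a box space sit at arbitrarily large distance from each other, these piecewise quasi-isometries assemble into a coarse equivalence between the box spaces $\bigsqcup_n V/H_n$ and $\bigsqcup_n C/(C \cap H_n)$: the uniform control within each component, combined with the infinite separation between components, is exactly what is needed to verify the coarse map axioms globally. Since both sides of the assembly map in Conjecture \ref{conj:main_conjecture} are coarse invariants of the metric space, this reduces the statement to the case $V = C$. If $C$ is finite then the box space is bounded and the conjecture is trivial, so I may assume $C \cong \bbZ$ and restrict attention to $X := \bigsqcup_n \bbZ/m_n\bbZ$ with $m_n = [\bbZ : \bbZ \cap H_n] \to \infty$.

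For this remaining cyclic case, I would verify that $X$ has asymptotic dimension at most $1$: given $R > 0$, each component $\bbZ/m_n\bbZ$ is a metric cycle which can be covered by two families of arcs, each of uniformly bounded diameter and pairwise $R$-disjoint within its family; since distinct components of $X$ are at infinite distance, the local covers assemble into a global two-colour $R$-disjoint cover of $X$. Finite asymptotic dimension implies finite decomposition complexity, so the theorem of Ramras, Tessera, and Yu \cite[Theorem 6.4]{Ramras.2018}, cited right after Conjecture \ref{conj:main_conjecture}, applies to $X$. Equivalently, via Corollary \ref{cor:third_term}, it supplies the vanishing $\colim_{d \geq 0} K_*\big(\calO^\lf(P_d(X); \calA)\big) = 0$, which is exactly Conjecture \ref{conj:main_conjecture} for the box space of $\bbZ$, and hence, via the reduction above, for the box space of $V$.

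The only substantive ingredient is the uniformity of the quasi-isometry constants supplied by the preceding lemma; this is what upgrades the componentwise quasi-isometries to a genuine coarse equivalence of box spaces. The main point to be careful about is that distinct components of a box space must be placed at arbitrarily large distance, so that the local nature of coarse control is automatic on the disjoint union; once that reduction is in hand, the cyclic case is immediate from the one-dimensionality of $X$ combined with the Ramras--Tessera--Yu result.
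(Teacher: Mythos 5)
Your argument is essentially the same as the paper's: reduce to the cyclic case via the preceding lemma's uniform quasi-isometry constants, then invoke finite asymptotic dimension (hence finite decomposition complexity) and \cite[Theorem~6.4]{Ramras.2018}. You supply a bit more detail than the paper at two points — the observation that the componentwise quasi-isometries assemble into a global coarse equivalence because the components of a box space are placed at pairwise distance tending to infinity, and the explicit two-colour cover verifying $\operatorname{asdim} \leq 1$ for the disjoint union of metric cycles — both of which the paper leaves implicit.

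One small imprecision: when $C$ is finite you say ``the box space is bounded and the conjecture is trivial,'' but $\bigsqcup_n V/H_n$ for finite $V$ is \emph{not} a bounded metric space — it is an infinite discrete space with components at ever larger separation. What is true is that it has asymptotic dimension $0$, so the same FDC argument you use in the cyclic case covers it; this is exactly how the paper handles the finite case, treating $X = \bigsqcup_n \pt$ on the same footing as $X = \bigsqcup_n C_n$. Replacing ``bounded, hence trivial'' by ``asymptotic dimension $0$, hence FDC'' closes the gap without changing the structure of your proof.
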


\begin{proof}
	Let $V$ be a residually finite and virtually cyclic group. 
	If $V$ is finite, then $V/H_n$ is quasi isometric to $\pt$. 
	Otherwise, $V/H_n$ is quasi isometric to $C_n$, where $C_n$ are cyclic graphs whose perimeters (and therefore their girths) tend to infinity. 
	
	In both cases, the parameters of the quasi isometries do not depend on $n$. 
	The conjecture is then either equivalent that of $X = \bigsqcup_{n \in \bbN} \pt$ or that of $X = \bigsqcup_{n \in \bbN} C_n$.
	In both cases, $X$ has finite asymptotic dimension, hence finite decomposition complexity, and we again refer to \cite[Theorem~6.4]{Ramras.2018}.
\end{proof}

\subsection{Reduction for Residually Finite Groups}

The following Theorem and parts of its proof were inspired by \cite[Theorem 4.17]{Oyono-Oyono.2009}.

\begin{theorem}
	\label{thm:reduction_for_residually_finite_groups}
	Let $G$ be a residually finite group with normal subgroups $H_n$ such that $\bigcap_{n \in \bbN} H_n = \{1\}$. Set $\calA_G := \frac{\prod_n}{\bigoplus_n} \calC_0(G/H_n; \calA_n)$.
	Note that this still depends on the choice of the $H_n$.
	The category $\calA_G$ has a $G$-action given by $g.(S, \pi, M) := (S, g \comp \pi, g \comp M)$.
	
	The following statements are equivalent.
	\begin{enumerate}
		\item $\colim_{d \geq 0} K_*\bigg(\dfrac{\prod^\bd_n}{\bigoplus_n}\calO_{H_n}(P_d(G); \calA_n)\bigg) = 0$.
		\item The assembly map
		$H_*^G(E_\Fin G; \calA_G) \rightarrow K_*(\calA_G[G])$
		is an isomorphism.
		\item The assembly map
		$H_*^G(E_\VCyc G; \calA_G) \rightarrow K_*(\calA_G[G])$
		is an isomorphism.
	\end{enumerate}
\end{theorem}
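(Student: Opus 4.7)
My plan is to prove (1) $\Leftrightarrow$ (2) by identifying the controlled categories appearing on both sides, and then deduce (2) $\Leftrightarrow$ (3) via a transitivity argument that invokes Corollary \ref{cor:coarse_conjecture_for_virtually_cyclic_groups}.

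\textbf{For (1) $\Leftrightarrow$ (2):} I would construct, for each $d \geq 0$, an equivalence of additive categories
\begin{equation*}
\Phi_d : \frac{\prod^\bd_n}{\bigoplus_n} \calO_{H_n}\bigl(P_d(G); \calA_n\bigr) \;\simeq\; \calO_G\bigl(P_d(G); \calA_G\bigr)
\end{equation*}
in two steps. First, a Mackey-style induction at each level,
\begin{equation*}
\calO_{H_n}\bigl(P_d(G); \calA_n\bigr) \;\simeq\; \calO_G\bigl(P_d(G); \calC_0(G/H_n; \calA_n)\bigr),
\end{equation*}
sends a free $H_n$-set $S$ to the induced free $G$-set $G \times_{H_n} S$, absorbing the $G/H_n$-coset data into the $\calC_0$-coefficient. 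Second, a distributivity argument commutes the bounded product modulo direct sum with the $\calO_G$-construction, using that uniform $\varepsilon$-control in the $\calO_G$-sense and in the bounded-product sense assemble into a single compatible $\varepsilon$-filtration. Applying $\colim_d K_*$ to $\Phi_d$ then reduces Statement 1 to $\colim_d K_*\bigl(\calO_G(P_d(G); \calA_G)\bigr) = 0$. By the $G$-universality of $\colim_d P_d(G)$ among proper $G$-CW-complexes (so that it models $E_\Fin G$) and continuity of $\calO_G$ along the directed system, this is further equivalent to $K_*\bigl(\calO_G(E_\Fin G; \calA_G)\bigr) = 0$. By the Karoubi long exact sequence for $\bigl(\calT_G, \calO_G\bigr)$ (Lemma \ref{lem:karoubi_filtration}) combined with Remark \ref{rem:assembly_map_vs_connecting_hom}, this last vanishing is precisely Statement 2.

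\textbf{For (2) $\Leftrightarrow$ (3):} I would apply the Bartels--Lück transitivity principle: the $\VCyc$-assembly for $(G, \calA_G)$ is an isomorphism iff the $\Fin$-assembly for $(G, \calA_G)$ is an isomorphism \emph{and} the $\Fin$-assembly for $(V, \Res^G_V \calA_G)$ is an isomorphism for every virtually cyclic $V \leq G$. Each such $V$ inherits residual finiteness from $G$ via the sequence $V \cap H_n$; moreover, since $G/H_n$ decomposes as a $V$-set into finitely many copies of $V/(V \cap H_n)$, the coefficient $\Res^G_V \calA_G$ is again of the form needed to apply the first part of the proof to $V$. Combined with Corollary \ref{cor:coarse_conjecture_for_virtually_cyclic_groups} (Statement 1 for $V$), this verifies the transitivity hypothesis and gives the equivalence.

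\textbf{Main obstacle.} The construction and verification of $\Phi_d$ is the technical heart. Simultaneously tracking $\varepsilon$-control in the metric on $P_d(G)$, uniform control across the sequence index $n$, and the asymptotic faithfulness of the covers $P_d(G) \to P_d(G/H_n)$ (Corollary \ref{cor:Rips_complex_respects_asymptotically_faithful_covers}) requires careful bookkeeping; the essential point is that morphisms in the bounded product which vanish modulo $\bigoplus_n$ correspond precisely to morphisms in $\calO_G(P_d(G); \calA_G)$, with all control parameters matched up. A secondary subtlety is the identification of $\colim_d P_d(G)$ with $E_\Fin G$ at the level of $K_*\bigl(\calO_G(-; \calA_G)\bigr)$, which may require an additional continuity or approximation argument not already present in the excerpt.
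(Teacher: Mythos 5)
Your proposal matches the paper's strategy: Lemma~\ref{lem:induction} supplies your ``Mackey-style induction,'' Lemma~\ref{lem:products_and_coefficients} supplies the distributivity of the bounded product with $\calO_G$, and Lemma~\ref{lem:no_nil_terms} carries out exactly your transitivity argument, using Lemma~\ref{lem:normal_by_normal} for the $V$-set decomposition of $G/H_n$ and Corollary~\ref{cor:coarse_conjecture_for_virtually_cyclic_groups} to dispose of the virtually cyclic case. One small correction to your ``main obstacle'': asymptotic faithfulness of the covers $P_d(G) \to P_d(G/H_n)$ plays no role in the construction of $\Phi_d$, since both sides of that equivalence are categories over the single space $P_d(G)$ with no reference to the box space or the covers---the covers only enter earlier, in Lemma~\ref{lem:isomorphism_to_equivariant} (which feeds Proposition~\ref{prop:equivalent_conjecture}, not this theorem).
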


\begin{remark}
	$\Fin$ is the family of finite subgroups of $G$.
	It is a fact that $\colim_{d \geq 0} P_d(G)$ is a model for $E_\Fin G$.
\end{remark}

\begin{proof}[Proof of Theorem \ref{thm:reduction_for_residually_finite_groups}]
	By Lemma \ref{lem:induction} and Lemma \ref{lem:products_and_coefficients} (see below) we have
	\begin{align*}
	\frac{\prod^\bd_n}{\bigoplus_n} \O_{H_n}(P_d(G); \calA_n)
	&\simeq \frac{\prod^\bd_n}{\bigoplus_n} \O_{G}(P_d(G); \calC_0(G/H_n;\calA_n)) \\
	&\simeq \O_{G}\bigg(P_d(G) ; \underset{\calA_G}{\underbrace{\frac{\prod_n}{\bigoplus_n} \calC_0(G/H_n; \calA_n)}}\bigg).
	\end{align*}
	This shows that
	$$\colim_{d \geq 0} K_*\bigg(\frac{\prod^\bd_n}{\bigoplus_n}\calO_{H_n}(P_d(G); \calA_n)\bigg) = 0$$
	is equivalent to
	$$K_*\big(\O_{G}(E_\Fin G ; \calA_G) \big) = \colim_{d \geq 0} K_*\big(\O_{G}(P_d(G) ; \calA_G) \big) = 0$$
	which is exactly the obstruction for the assembly map
	$$H_*^G(E_\Fin G; \calA_G) \rightarrow K_*(\calA_G[G])$$
	being an isomorphism. 
	
	\vspace{1ex}
	Statements 2.\ and 3.\ being equivalent follows from Lemma \ref{lem:no_nil_terms}. The proof uses the equivalence of 1.\ and 2.\ without causing the proof to be circular.
\end{proof}

\begin{lemma}
	\label{lem:no_nil_terms}
	In the situation of Theorem \ref{thm:reduction_for_residually_finite_groups}, the canonical map $E_\Fin G \rightarrow E_\VCyc G$ induces an isomorphism
	$$H_*^G(E_\Fin G; \calA_G) \rightarrow H_*^G(E_\VCyc G; \calA_G).$$
\end{lemma}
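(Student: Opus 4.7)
The plan is to deduce this lemma from the transitivity principle for equivariant homology theories, applied to the family inclusion $\Fin \subseteq \VCyc$, combined with the equivalence $1 \Leftrightarrow 2$ of Theorem~\ref{thm:reduction_for_residually_finite_groups} (whose proof is independent of the present lemma, so no circularity arises) and Corollary~\ref{cor:coarse_conjecture_for_virtually_cyclic_groups}.

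By the transitivity principle it suffices to show that for every infinite virtually cyclic subgroup $V \leq G$ with inclusion $\iota : V \hookrightarrow G$, the assembly map
$$H_*^V\bigl(E_\Fin V;\, \iota^*\calA_G\bigr) \;\longrightarrow\; K_*\bigl((\iota^*\calA_G)[V]\bigr)$$
is an isomorphism. Such a $V$ is itself residually finite, with $H'_n := V \cap H_n$ of finite index normal in $V$ and $\bigcap_n H'_n = \{1\}$. The crucial step is to present $\iota^*\calA_G$ in the shape required to invoke Theorem~\ref{thm:reduction_for_residually_finite_groups} for $V$. For each $n$, the finite $V$-set $G/H_n$ decomposes into $V$-orbits $\coprod_{i=1}^{k_n} V/H''_{n,i}$, where $H''_{n,i} := V \cap g_{n,i} H_n g_{n,i}^{-1}$ is normal in $V$ (because $H_n$ is normal in $G$) and $g_{n,i}$ runs through representatives of $V \backslash G / H_n$. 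Re-indexing the pairs $(n,i)$ as a single countable parameter $m$, one aims to establish an equivalence of additive $V$-categories
$$\iota^*\calA_G \;\simeq\; \frac{\prod_m}{\bigoplus_m} \calC_0\bigl(V/H''_m;\, \calA_{n(m)}\bigr),$$
thereby identifying $\iota^*\calA_G$ with $\calA_V$ for $V$ as a residually finite group (with subgroups $H''_m$ and coefficients $\calA_{n(m)}$); note that $\bigcap_m H''_m = \{1\}$ since the choice $g_{n,1} = 1$ already places $V \cap H_n$ among the $H''_m$.

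Given this identification, applying $1 \Leftrightarrow 2$ of Theorem~\ref{thm:reduction_for_residually_finite_groups} to $V$ reduces the desired assembly isomorphism to Conjecture~\ref{conj:main_conjecture} for the box space $\bigsqcup_m V/H''_m$, which in turn follows by the argument of Corollary~\ref{cor:coarse_conjecture_for_virtually_cyclic_groups}: every $V/H''_m$ is uniformly quasi-isometric to a point or to a cyclic graph, so the whole box space has finite decomposition complexity and \cite[Theorem~6.4]{Ramras.2018} applies. The main obstacle I expect is precisely the equivalence displayed above: the $V$-orbit decomposition of $G/H_n$ is not a decomposition into metric components, so $\iota^*\calC_0(G/H_n;\calA_n)$ genuinely contains non-diagonal morphisms connecting different $V$-orbits whenever these orbits happen to come within the morphism bound $\alpha$. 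The asymptotic faithfulness of the covers $G \to G/H_n$ should however force these cross-orbit morphisms to disappear upon passing to the quotient by $\bigoplus_n$ and to the colimit $\colim_d$, exactly as in the proof of Lemma~\ref{lem:isomorphism_to_equivariant}, which serves as the template for this identification.
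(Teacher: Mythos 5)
Your high-level strategy coincides with the paper's: invoke the transitivity principle of L\"uck, observe that every virtually cyclic $V \leq G$ is itself residually finite with $\bigcap_n (V\cap H_n)=\{1\}$, identify the restricted coefficient category $\iota^*\calA_G$ with one of the form $\calA_V$ so that the equivalence $1\Leftrightarrow 2$ of Theorem~\ref{thm:reduction_for_residually_finite_groups} applies, and then finish via Corollary~\ref{cor:coarse_conjecture_for_virtually_cyclic_groups}. Where you diverge is the middle step. The paper factors the $V$-set as $G/H_n \cong VH_n/H_n \times G/VH_n$ (Lemma~\ref{lem:normal_by_normal}) and absorbs the factor with trivial $V$-action into the coefficients, yielding $\calC_0(G/H_n;\calA_n)\cong\calC_0(V/(V\cap H_n);\calB_n)$ with $\calB_n=\calC_0(G/VH_n;\calA_n)$ --- this keeps the outer index $n$ unchanged, so $\iota^*\calA_G\simeq\calB_V$ is in literally the form required. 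You instead decompose $G/H_n$ into $V$-orbits and re-index over pairs $(n,i)$. Note in passing that since $H_n$ is normal all your subgroups $H''_{n,i}$ are in fact equal to $V\cap H_n$, so no genuinely new subgroups appear; the re-indexing only introduces multiplicities, which is harmless because each $n$ contributes only finitely many pairs $(n,i)$, so $\bigoplus_n\bigoplus_{i\le k_n}$ and $\bigoplus_{(n,i)}$ define the same ideal.

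The one thing I would push back on concretely: the ``main obstacle'' you flag --- cross-orbit morphisms in $\iota^*\calC_0(G/H_n;\calA_n)$ --- does not exist, and the mechanism you propose for handling it (asymptotic faithfulness killing them in the colimit over $d$) is not available at this point of the argument, since the identification of $\iota^*\calA_G$ must be an honest $V$-equivariant equivalence of additive categories \emph{before} any Rips-complex colimit is taken. Fortunately the obstacle is illusory: the subscript $0$ in $\calC_0$ imposes Property~\ref{c_concentrated_morphisms} of Definition~\ref{def:controlled_categories}, forcing $\varphi_{s'}^s\neq 0\Rightarrow\pi_X(s)=\pi'_X(s')$, so morphisms in $\calC_0(G/H_n;\calA_n)$ never connect points in distinct $V$-orbits. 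Hence $\calC_0(G/H_n;\calA_n)$ literally splits as a finite direct sum over orbits, and your re-indexed equivalence follows immediately --- no appeal to asymptotic faithfulness, $\colim_d$, or Lemma~\ref{lem:isomorphism_to_equivariant} is needed or appropriate here. With that correction your route is sound and delivers the same conclusion as the paper's; the paper's Lemma~\ref{lem:normal_by_normal} route is arguably tidier because it avoids re-indexing the family of subgroups, but both arguments exploit exactly the same structural fact about $\calC_0$ (concentration of morphisms), which is what makes a product or orbit decomposition of the base pass through to the category.
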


\begin{proof}
	Using the transitivity principle (cf.\ \cite[Proposition 2.9]{Lueck.2005}) we only have to show that the assembly maps 
	\begin{equation}
	H_*^V(E_\Fin V; \calA_G) \rightarrow K_*(\calA_G[V]) \tag{$\ast$}
	\end{equation}
	are isomorphisms for all virtually cyclic subgroups $V \subseteq G$. So let $V \subseteq G$ be virtually cyclic. Since it is a subgroup of a residually finite group, $V$ itself is residually finite, for which we can take $V \cap H_n \subseteq V$ as the normal subgroups. Clearly, $\bigcap_n V \cap H_n = \{1\}$.
	
	At the end of this proof we will show that there is an equivalence of categories $\calA_G \rightarrow \calB_V$ (with $V$-action) where
	$$\calB_V = \frac{\prod_n}{\bigoplus_n} \calC_0(V/(V \cap H_n); \calB_n) \quad \text{ and } \quad \calB_n := \calC_0(G/V H_n; \calA_n).$$
	If we know this, then ($\ast$) is an isomorphism if and only if
	$$ H_*^V(E_\Fin V; \calB_V) \rightarrow K_*(\calB_V[V]). $$
	However, by Theorem \ref{thm:reduction_for_residually_finite_groups}, this is the case if and only if 
	$$\colim_{d \geq 0} K_*\bigg( \frac{\prod^\bd_{n\in \bbN}}{\bigoplus_{n \in \bbN}} \calO_{V \cap H_n}(P_d(V); \calB_n)\bigg) = 0.$$
	Here again, we know this to be true because of Corollary \ref{cor:coarse_conjecture_for_virtually_cyclic_groups}.
	
	For the equivalence $\calA_G \rightarrow \calB_V$, regard the following chain of equivalences.
	\begin{align*}
	A_G &=\frac{\prod_n}{\bigoplus_n} \calC_0(G/H_n; \calA_n) \\
	&\cong \frac{\prod_n}{\bigoplus_n} \calC_0(V H_n/H_n \times G/V H_n; \calA_n) \tag{Lemma \ref{lem:normal_by_normal}}  \\
	&\simeq \frac{\prod_n}{\bigoplus_n} \calC_0(V H_n/H_n; \calC_0(G/V H_n; \calA_n))\\
	&\cong \frac{\prod_n}{\bigoplus_n} \calC_0(V/(V \cap H_n); \calB_n) \tag{2$^\text{nd}$ isomorphism theorem} \\
	&= \calB_V \tag*{\qedhere}
	\end{align*}
\end{proof}

\begin{lemma}
	\label{lem:normal_by_normal}
	Let $G$ be a group with a normal subgroup $H \subseteq G$.
	Let $V \subseteq G$ be any subgroup.
	Then $VH/H \times G/VH \cong G/H$ as $V$-sets, in which we let $V$ act trivially on $G/VH$.
\end{lemma}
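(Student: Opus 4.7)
The plan is to construct an explicit $V$-equivariant bijection after fixing a transversal. Let $T \subseteq G$ be a set of representatives for the cosets of $VH$ in $G$, so that $G = \bigsqcup_{t \in T} VHt$. Define
$$\Phi \colon VH/H \times G/VH \longrightarrow G/H, \qquad (aH,\, tVH) \longmapsto atH,$$
for $a \in VH$ and $t \in T$.

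First I would verify that $\Phi$ is well-defined. Since we use the chosen transversal, the second coordinate introduces no ambiguity. For the first coordinate, if $aH = a'H$ with $a' = ah$, $h \in H$, then normality of $H$ gives $ht = t(t^{-1}ht) \in tH$, hence $a'tH = ahtH = atH$. This is the key place where normality enters. $V$-equivariance is then immediate: for $v \in V$, the hypothesis that $V$ acts trivially on $G/VH$ means $v \cdot (aH, tVH) = (vaH, tVH)$, and $\Phi(vaH, tVH) = vatH = v \cdot \Phi(aH, tVH)$.

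For bijectivity, surjectivity follows directly from the decomposition $G = \bigsqcup_{t \in T} VHt$: every $g \in G$ is uniquely of the form $bt$ with $b \in VH$ and $t \in T$, so $gH = \Phi(bH, tVH)$. For injectivity, suppose $atH = a't'H$. Then $a't' \in atH \subseteq VHtH = VHt$ (using $tH = Ht$ again), forcing $VHt = VHt'$ and thus $t = t'$. The equation $atH = a'tH$ then gives $a^{-1}a' \in tHt^{-1} = H$, whence $aH = a'H$. There is no real obstacle here: well-definedness and injectivity both reduce to the elementary input that $H$ is normal in $G$, and the rest is bookkeeping. Note that $\Phi$ depends on the choice of transversal $T$, but the isomorphism class of $V$-sets does not.
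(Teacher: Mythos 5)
There is a genuine gap in the well-definedness step. You choose $T$ so that $G=\bigsqcup_{t\in T}VHt$, i.e.\ $T$ is a transversal for the \emph{right} cosets $VH\backslash G$. You then write elements of $G/VH$ as $tVH$ with $t\in T$ and claim ``the second coordinate introduces no ambiguity.'' But the map $T\to G/VH$, $t\mapsto tVH$, is in general neither injective nor surjective, because the natural bijection $VH\backslash G\to G/VH$ sends $VHt$ to $t^{-1}VH$, not to $tVH$. Concretely, take $G=S_3$, $H=\{e\}$, $V=\{e,(12)\}$, so $VH=V$. Then $T=\{e,(13),(123)\}$ is a right transversal (since $V(13)=\{(13),(132)\}$ and $V(123)=\{(123),(23)\}$), yet $(13)V=\{(13),(123)\}=(123)V$, so two distinct elements of $T$ determine the same left coset. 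Your formula then yields $\Phi(eH,(13)V)=(13)H\neq(123)H=\Phi(eH,(123)V)$ on equal inputs, so $\Phi$ is not a function. The arguments you give for injectivity and surjectivity are fine, but they establish a bijection $VH/H\times T\to G/H$, $(aH,t)\mapsto atH$; the problem is purely in how you try to re-index $T$ by $G/VH$.

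The fix is small and brings you to essentially the paper's argument: identify $T$ with $G/VH$ via $t\mapsto t^{-1}VH$ (equivalently, for each left coset $c\in G/VH$ pick $t(c)\in T$ with $t(c)^{-1}\in c$), and then set $\Phi(aH,c)=a\,t(c)\,H$. This is precisely what the paper's section $s$ of $\pr\circ\operatorname{inv}$ encodes: $s(gVH)$ is an element with $s(gVH)^{-1}VH=gVH$, so $\{s(c)\}_{c}$ is a right transversal correctly indexed by left cosets. Since $V$ acts trivially on $G/VH$, any bijection $T\cong G/VH$ is automatically $V$-equivariant, so once the re-indexing is corrected the rest of your argument (including the uses of normality of $H$ for well-definedness in the first slot and for injectivity) goes through unchanged.
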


\begin{proof}
	\def\inv{{\operatorname{inv}}}
	Choose a section (of sets) $s: G/VH \rightarrow G$ of $\pr \comp \inv :G \rightarrow G/VH$ and set
	\begin{align*}
		\varphi: VH/H \times G/VH &\rightarrow G/H \\
		(vH, gVH) &\mapsto vs(gVH)H.
	\end{align*}
	Clearly, this is an $V$-map.
	The inverse to this map is given by
	\begin{align*}
		\psi: G/H &\rightarrow VH/H \times G/VH \\
		gH &\mapsto (gs(g^{-1}VH)^{-1}H, g^{-1}VH).
	\end{align*}
	This clearly is an $V$-map as well.
	
	The fact that $s$ is a section of $\pr \comp \inv$ translates to $s(gVH)^{-1}VH = gVH$ for all $g \in G$.
	The following computations verify that $\varphi$ and $\psi$ are indeed inverse to each other.
	\begin{align*}
	\varphi \comp \psi (gH) 
	&= \varphi\big((gs(g^{-1}VH)^{-1}H, g^{-1}VH)\big) \\
	&= gs(g^{-1}VH)^{-1}s(g^{-1}VH)H \\
	&= gH \\
	~\\
	\psi \comp \varphi (vH, gVH) 
	&= \psi\big(vs(gVH)H)\big) \\
	&= (vs(gVH)s(\underset{=s(gVH)^{-1}VH = gVH}{\underbrace{(vs(gVH))^{-1}VH}})^{-1}H,
	\underset{=s(gVH)^{-1}VH = gVH}{\underbrace{(vs(gVH))^{-1}VH}}) \\
	&= (vs(gVH)s(gVH)^{-1}H,gVH) \\
	&= (vH,gVH) \qedhere
	\end{align*}
\end{proof}

%
%
%

\begin{lemma}
	\label{lem:induction}
	Let $X$ be a locally compact metric space with isometric $G$-action and $H \subseteq G$ a subgroup.
	There is an isometry of $\varepsilon$-filtered categories
	$$\calO_G(X; \calC_0(G/H; \calA)) \simeq \calO_{H}(X; \calA),$$
	i.e.\ not only are those categories equivalent, but the $\varepsilon$-filtration is preserved as well.
\end{lemma}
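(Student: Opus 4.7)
The plan is to write down an explicit ``induction'' equivalence $\Phi \colon \calO_H(X; \calA) \to \calO_G(X; \calC_0(G/H; \calA))$, sending an $H$-equivariant object to its $G$-equivariant avatar with $\calC_0(G/H; \calA)$-coefficients concentrated at the single coset $eH$, together with its stalk-at-$eH$ quasi-inverse $\Psi$, and then to verify that both functors preserve the $\varepsilon$-filtration because the coefficient category $\calC_0(G/H; \calA)$ carries no non-trivial metric control of its own.

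Concretely, set $\Phi(\tilde S, \tilde \pi, \tilde M) := (G \times_H \tilde S, \pi, M)$ with $\pi([g, \tilde s]) := g.\tilde \pi(\tilde s)$ and $M([g, \tilde s]) := (\{\ast\}, \ast \mapsto gH, \ast \mapsto g.\tilde M(\tilde s))$; well-definedness on the class $[gh, \tilde s] = [g, h\tilde s]$ is the $H$-equivariance of $\tilde\pi$ and $\tilde M$. On morphisms, $\Phi(\tilde\varphi)^{[g, \tilde s]}_{[g', \tilde s']}$ is $0$ unless $gH = g'H$; after rewriting $[g', \tilde s'] = [g, h\tilde s']$ with $h = g^{-1}g' \in H$, its unique non-zero entry (concentrated at $gH$) is $g_\ast\tilde\varphi^{\tilde s}_{h\tilde s'}$. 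The inverse $\Psi$ sends $(S, \pi, M)$ with $M(s) = (T_s, \rho_s, N_s)$ to the free $H$-set $\tilde S := \{(s, t) : s \in S,\ t \in T_s,\ \rho_s(t) = eH\}$ equipped with $h.(s, t) := (hs, t)$, $\tilde\pi(s, t) := \pi(s)$, $\tilde M(s, t) := N_s(t)$, and to morphisms $\Psi(\varphi)^{(s, t)}_{(s', t')} := (\varphi^s_{s'})^t_{t'}$ (the concentration condition for $\calC_0$-morphisms forces $\rho_s(t) = \rho'_{s'}(t')$, which matches $eH = eH$).

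The identity $\Psi\Phi = \id$ is immediate from $[h, \tilde s] = [1, h\tilde s]$, which canonically identifies the stalk at $eH$ of $\Phi(\tilde S, \tilde\pi, \tilde M)$ with $\tilde S$. For $\Phi\Psi \cong \id$, decompose $M(s) = \bigoplus_{t \in T_s}(\{\ast\}, \rho_s(t), N_s(t))$ in $\calC_0(G/H; \calA)$; on each single-label summand, any coset representative $a_t \in \rho_s(t)$ produces the isomorphism $[g, (a_t^{-1}s, t)] \mapsto g a_t^{-1}s$. The isometry of the $\varepsilon$-filtrations then falls out: morphisms of $\calC_0(G/H; \calA)$ are concentrated (property~\ref{c_concentrated_morphisms}) and so contribute no metric weight, and the isometric $G$-action on $X$ gives $d(\pi([g, \tilde s]), \pi'([g, \tilde s''])) = d(\tilde\pi(\tilde s), \tilde\pi'(\tilde s''))$ together with the analogous identity on the $\bbN$-coordinate, so $\tilde\varphi$ and $\Phi(\tilde\varphi)$ are controlled by exactly the same $\varepsilon$.

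The one step where care is needed is property~\ref{c_locally_compact} under $\Phi$: the identification $\pi^{-1}(K) = G \times_H \tilde\pi^{-1}(K)$ reduces finiteness of $\pi^{-1}(K)/G$ to finiteness of $\tilde\pi^{-1}(K)/H$, which follows from property~\ref{c_locally_compact} for $\tilde S$ provided $K$ is also $H$-cocompact. In the only application of the lemma (Theorem~\ref{thm:reduction_for_residually_finite_groups}) we have $[G : H_n] < \infty$, so every $G$-cocompact subset of $X \times \bbN$ is automatically $H_n$-cocompact and this technicality evaporates.
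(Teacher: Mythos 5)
Your proof takes essentially the same route as the paper's: your $\Phi$ is exactly the paper's essential-surjectivity witness $(\widetilde S, \widetilde\pi, \widetilde M) = (G\times_H S, g.\pi(s), (\{\ast\}, gH, g.M(s)))$, and your $\Psi$ is exactly the paper's functor $\overline{F}$, which pushes forward to $\overline S = \coprod_{s\in S}\pi_s^{-1}(H/H)$; your ``stalk at $eH$'' is the same set written as pairs $(s,t)$ with $\rho_s(t)=eH$. The paper phrases the argument as ``$\overline F$ is fully faithful and essentially surjective'' rather than writing down both functors with explicit natural isomorphisms, but the content is identical, including the observation that the $\varepsilon$-filtration is preserved because $\overline\pi([t\in S_s])=\pi(s)$ just relabels the same coordinates.

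Where you genuinely add something is the final paragraph. You correctly note that $\Phi$ (the paper's $\widetilde{(\cdot)}$ construction) only visibly satisfies Property~\ref{c_locally_compact} of Definition~\ref{def:controlled_categories} when every $G$-cocompact $G$-invariant $K\subseteq X\times\bbN$ is also $H$-cocompact, so that the hypothesis on $(\tilde S,\tilde\pi,\tilde M)\in\calO_H$ can be invoked. This holds automatically when $[G:H]<\infty$, which covers both places the lemma is actually used in the paper (Theorem~\ref{thm:reduction_for_residually_finite_groups} with $H=H_n$, and inside Lemma~\ref{lem:no_nil_terms} with $V\cap H_n\subseteq V$). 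The paper does not flag this: it defines $\overline F$, proposes $(\widetilde S,\widetilde\pi,\widetilde M)$ as a preimage, and does not verify Property~\ref{c_locally_compact} for it. As stated the lemma makes no finite-index assumption, so either the statement should be read with that implicit hypothesis, or a separate argument (exploiting the finiteness of the objects of $\calC_0(G/H;\calA)$, which does make $\overline F$ itself land in $\calO_H$ even for infinite index) would be needed to rescue essential surjectivity. Your flagging of this point and your observation that it ``evaporates'' in the only application are both correct and worth making explicit.

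Two small remarks. First, when checking that $\Psi(\varphi)$ is $H$-equivariant you are implicitly using that the inner index $t$ is fixed by the $G$-action on $\calC_0(G/H;\calA)$ (only $\pi_s$ and $M_s$ are twisted, not $S_s$); this is true and matches the paper's convention $g.(S,\pi,M)=(S,g\circ\pi,g\circ M)$, but it is worth saying. Second, your $\Phi\Psi\cong\id$ argument via the direct-sum decomposition $M(s)\cong\bigoplus_{t\in T_s}(\{\ast\},\rho_s(t),N_s(t))$ depends on $T_s$ being finite, which holds because objects of $\calC_0(G/H;\calA)$ have cocompact (hence finite, since $G/H$ is discrete and the acting group is trivial) image; again correct, but worth stating since it is what makes all the sums finite.
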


\begin{proof}
For an object $(S, \pi, M) \in \calO_{G}(X; \calC_0(G/H; \calA))$ denote
$(S_s, \pi_s, M_s) := M(s) \in \calC_0(G/H; \calA)$.
We have $M(gs) = g.M(s)$ implying $S_{gs} = S_s$, $\pi_{gs} = g.\pi_s$ and $M_{gs} = g.M_s$.
Using this notation we define a functor
$$\overline F: (S, \pi, M) \mapsto (\overline{S}, \overline{\pi}, \overline{M})$$
via 
\begin{align*}
	\overline{S} &:= \coprod\nolimits_{s \in S} \pi_s^{-1}(H/H) \quad \text{ with $H$-action } \quad h.[t \in S_s] := [t \in S_{hs}], \\
	\overline{\pi} &: [t \in S_s] \mapsto \pi(s), \text{ and}\\
	\overline{M} &: [t \in S_s] \mapsto M_s(t).
\end{align*}
The free $G$-action on $S$ restricts to a free $H$-action on $\overline{S}$.
Also $\overline{\pi}$ and $\overline{M}$ are $H$-equivariant because we have $\pi(hs) = h.\pi(s)$ and $M_{hs} = h.M_s$ for all $h \in H$.
We define $\overline F$ on morphisms simply by
$$\big[\varphi : (S, \pi, M) \rightarrow (S', \pi', M')\big] \mapsto \big[\overline \varphi: (\overline S, \overline \pi, \overline M) \rightarrow (\overline S', \overline \pi', \overline M')\big]$$
where $\overline \varphi^{[t \in S_s]}_{[t' \in S'_{s'}]} := (\varphi^s_{s'})^t_{t'} : M_s(t) \rightarrow M'_{s'}(t')$.
As this is merely a rearrangement of symbols, it is clear that the functor is fully faithful.

In order to see that $\overline{F}$ is essentially surjective, consider an object $(S, \pi, M) \in \calO_{H}(X; \calA)$.
We claim that this is isomorphic to the image of $(\widetilde{S}, \widetilde{\pi}, \widetilde{M})$, where
\begin{align*}
\widetilde S &:= G \times_H S := \bigslant{G \times S}{(gh,s) \simeq (g,hs) \forall h \in H}, \\
\widetilde \pi&: [g,s] \mapsto g.\pi(s), \text{ and } \\
\widetilde M&: [g,s] \mapsto (\{\ast\}, \ast \mapsto gH, \ast \mapsto g.M(s)).
\end{align*}
It is easy to see that $\widetilde S$ is indeed a free $G$-set.
Also, an immediate calculation shows that $\widetilde \pi$ and $\widetilde M$ are well-defined.
Now $\overline{F}\big(\widetilde{S}, \widetilde{\pi}, \widetilde{M}\big)$ is given by $\big(\overline{\widetilde{S}}, \overline{\widetilde{\pi}}, \overline{\widetilde{M}}\big)$.

We see
$$\overline{\widetilde S} = \coprod_{[g,s] \in G \times_HS} \underset{= \{\ast\} \text{ if } g \in H}{\underbrace{\widetilde\pi_s^{-1}(H/H)}} \cong H \times_H S \cong S.$$
Therefore, every element in $\overline{\widetilde S}$ can be written as $[1,s]$, for $s \in S$. This gives
\begin{align*}
\overline{\widetilde \pi}: [\ast \in \widetilde\pi_s^{-1}(H/H)] \text{ of } [1,s]\text{-component } &\mapsto \;\widetilde{\pi}([1,s]) = 1.\pi(s) = \pi(s) \\
\overline{\widetilde{M}} : [\ast \in \widetilde\pi_s^{-1}(H/H)] \text{ of } [1,s]\text{-component } &\mapsto \;1.M(s) = M(s). \qedhere
\end{align*}
\end{proof}

\begin{lemma}
\label{lem:products_and_coefficients}
Let $X$ be a locally compact metric space of bounded geometry with isometric $G$-action and $\calA_n$ a sequence of additive categories.
There is an equivalence of categories
$$\Phi: \calO_G\bigg(X; \frac{\prod_n}{\bigoplus_n} \calA_n\bigg) \rightarrow \frac{\prod^\bd_n}{\bigoplus_n} \calO_G(X; \calA_n).$$
\end{lemma}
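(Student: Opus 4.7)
Let me denote $\tilde\calA := \frac{\prod_n}{\bigoplus_n}\calA_n$ for brevity. The plan is to define the functor $\Phi$ explicitly and then verify that it is well-defined, fully faithful, and essentially surjective. On an object $(S,\pi,M) \in \calO_G(X;\tilde\calA)$, the coefficient $M(s)$ is an object of $\tilde\calA$, which (since taking a quotient only affects morphisms) is literally a tuple $(M_n(s))_n \in \Ob(\prod_n\calA_n)$, so set $\Phi(S,\pi,M) := (S,\pi,M_n)_{n\in\bbN}$; each component is a legitimate object of $\calO_G(X;\calA_n)$ because the structural conditions on $(S,\pi)$ are independent of the coefficient category. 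On a morphism $\varphi : (S,\pi,M) \to (S',\pi',M')$, each entry $\varphi^s_{s'} \in \Hom_{\tilde\calA}(M(s),M'(s'))$ is an equivalence class of tuples in $\prod_n \Hom_{\calA_n}(M_n(s),M'_n(s'))$; choose a representative tuple $(\psi^s_{s',n})_n$ (with the zero tuple whenever $\varphi^s_{s'}=0$), assemble the matrices $\psi_n := (\psi^s_{s',n})_{s,s'}$, and set $\Phi(\varphi) := [(\psi_n)_n]$ in the RHS quotient. The support of $\psi_n$ lies in the support of $\varphi$, so $\psi_n$ inherits finite rows and columns together with the same uniform $\alpha$-bound in $X$ and $\bbN$; hence $(\psi_n)_n$ is uniformly $\alpha$-controlled and defines a morphism in $\prod^\bd_n\calO_G(X;\calA_n)$.

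For faithfulness, if $\Phi(\varphi_1) = \Phi(\varphi_2)$ then $(\psi_{1,n}-\psi_{2,n})_n$ factors through some $(T_n,\rho_n,N_n)_n \in \bigoplus_n\calO_G(X;\calA_n)$, forcing $\psi_{1,n} = \psi_{2,n}$ for all $n$ outside the finite set where $T_n$ is nonempty; entry-wise, each difference $(\psi^s_{1,s',n}-\psi^s_{2,s',n})_n$ then lies in $\bigoplus_n\calA_n$ and represents $0$ in $\tilde\calA$, giving $\varphi_1=\varphi_2$. For fullness, represent the given class by $(\tau_n)_n$ with uniform $\alpha$-control and define $\varphi$ by $\varphi^s_{s'} := [(\tau^s_{s',n})_n]$; conditions~\ref{c_bounded_n}, \ref{c_bounded_x}, \ref{c_controlled} hold automatically from uniform control, and condition~\ref{c_rows_columns_finite} follows because bounded geometry of $X$ together with property~\ref{c_locally_compact} for $(S',\pi',M')$ makes the preimage $\pi'^{-1}(B_\alpha(\pi_X(s))\times\{t : |t-\pi_\bbN(s)|<\alpha\})$ finite. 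Well-definedness of $\Phi$ on morphisms has the same flavour: changing the chosen representative $(\psi^s_{s',n})_n$ for a single entry modifies only finitely many $n$, and the accumulated discrepancy across all entries factors through an object of $\bigoplus_n\calO_G(X;\calA_n)$ provided that the support of $\varphi$ has only finitely many $G$-orbits in each diagonal $\alpha$-neighbourhood of $(X\times\bbN)^2$ — a property supplied by bounded geometry.

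The hardest step is essential surjectivity. Given $(S_n,\pi_n,M_n)_n \in \prod^\bd_n\calO_G(X;\calA_n)$, I aim to construct a single $(S,\pi,M) \in \calO_G(X;\tilde\calA)$ with $\Phi(S,\pi,M)$ isomorphic in the RHS quotient to $(S_n,\pi_n,M_n)_n$. The strategy is to build a common enlargement: a free $G$-set $S$ with $G$-equivariant map $\pi:S\to X\times\bbN$ admitting $G$-injections $\iota_n:S_n\hookrightarrow S$ satisfying $\pi\comp\iota_n = \pi_n$, and then to set $M(s) := (M_n(\iota_n^{-1}(s)))_n$, understood as zero in the $n$-th slot when $s\notin\iota_n(S_n)$. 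The inclusions $(\iota_n)_n$ and their partial inverses are $0$-controlled and uniform, furnishing mutually inverse isomorphisms $(S_n,\pi_n,M_n)_n\cong\Phi(S,\pi,M)$ already in $\prod^\bd_n\calO_G(X;\calA_n)$. The main obstacle is the construction of $(S,\pi)$ satisfying property~\ref{c_locally_compact}: the naive choice $S=\bigsqcup_n S_n$ fails, since $\bigsqcup_n \pi_n^{-1}(K)/G$ can have infinitely many orbits even though each $\pi_n^{-1}(K)/G$ is finite. Bounded geometry of $X$ is precisely what allows an economical collation — identifying orbits across different $S_n$ whose images share the same cocompact region of $X\times\bbN$ — so that the resulting $(S,\pi)$ retains cofiniteness of cocompact preimages.
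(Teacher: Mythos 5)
Your treatment of $\Phi$ on objects and morphisms, and of fullness and faithfulness, matches the paper's approach (the paper simply writes $M_n := \pr_n\comp M$ and $(\varphi_n)^s_{s'} := \pr_n\comp\varphi^s_{s'}\comp\iota_n$ instead of speaking of representative tuples, but the content is the same). The gap is in essential surjectivity, which you correctly flag as the hard step but then only sketch, and the sketch as stated is internally inconsistent. You require $G$-injections $\iota_n : S_n\hookrightarrow S$ with $\pi\comp\iota_n = \pi_n$ \emph{exactly} (so that the resulting isomorphisms are $0$-controlled), and then propose to repair the failure of Property~\ref{c_locally_compact} by ``identifying orbits across different $S_n$ whose images share the same cocompact region.'' But if you identify $s_1\in S_{n_1}$ with $s_2\in S_{n_2}$ to a single $s\in S$, the constraint $\pi\comp\iota_{n_i} = \pi_{n_i}$ forces $\pi_{n_1}(s_1) = \pi(s) = \pi_{n_2}(s_2)$; generically the images of the $\pi_n$ are disjoint, so no identification is permitted and you are stuck with $S = \bigsqcup_n S_n$, which you already observed fails cofiniteness. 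Put differently, exact preservation of $\pi$ is incompatible with the collation you need, and the argument does not close.

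The paper resolves this tension by \emph{not} demanding a $0$-controlled isomorphism: it fixes a zero-convergent sequence $\delta_k$, chooses maximal $\delta_k$-separated $G$-invariant subspaces $X_k\subseteq X$ (here is where bounded geometry enters) with nearest-point maps $f_k : X\to X_k$, and builds $S := \coprod_k X_k$, $\pi([x\in X_k]) := (x,k)$, with
$$M([x\in X_k]) := \Bigl(\textstyle\bigoplus\bigl\{M_n(s) \,:\, \pi_{n,\bbN}(s)=k,\ f_k(\pi_{n,X}(s)) = x\bigr\}\Bigr)_n.$$
Two ideas are essential here and are missing in your sketch: (i) the isomorphism to $(S_n,\pi_n,M_n)_n$ is only $\delta_k$-controlled over the $k$-th slice, which is exactly good enough because $\delta_k\to 0$ feeds Property~\ref{c_controlled}; and (ii) aggregation is by \emph{direct sums} over the fibre of $f_k$, not by a bijection $\iota_n^{-1}$, which is what makes $S$ small enough (its orbit set sits inside the fixed mesh $\coprod_k X_k/G$) to satisfy Property~\ref{c_locally_compact} regardless of how the $\pi_n$ spread out. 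Replacing exact factorization through $S$ by an $\varepsilon$-small perturbation onto a discrete net, and packaging several modules into a single direct sum at each net point, is the mechanism that your outline needs but does not supply.
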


\begin{proof}
	Regard an object $(S, \pi, M) \in \calO_G \left( X ; \frac{\prod_n}{\bigoplus_n} \calA_n \right)$.
	The image of $(S, \pi, M)$ under $\Phi$ is the object $(S_n, \pi_n, M_n)_n \in \dfrac{\prod^\bd_n}{\bigoplus_n} \calO_{G} \big(X; \calA_n \big)$, which we define via $S_n := S$, $\pi_n := \pi$ and $M_n := \pr_n \comp M$.
	
	A morphism $\varphi: (S, \pi, M) \rightarrow (S', \pi', M')$ is mapped to a sequence of morphisms $\varphi_n: (S_n, \pi_n, M_n) \rightarrow (S_n', \pi_n', M_n')$ simply given by restrictions, i.e. $(\varphi_n)^s_{s'} := \pr_n \comp \varphi^s_{s'} \comp \iota_n$.
	This functor clearly is fully faithful.
	
	Let us now verify essential surjectivity. To do so, we choose a zero-convergent sequence $(\delta_k)_{k \in \bbN}$ and a sequence $X_k$ of maximal $\delta_k$-separated $G$-subspaces of $X$ with maps $f_k : X \rightarrow X_k$ such that $d(x, f_k(x)) \leq \delta_k$. 
	Now let $(S_n, \pi_n, M_n)_n \in \frac{\prod^\bd_n}{\bigoplus_n} \calO_{G} \big(X; \calA_n \big)$.
	Define $(S, \pi, M) \in \calO_G\bigg(X; \frac{\prod_n}{\bigoplus_n} \calA_n\bigg)$ via
	\begin{align*}
		S &:= \coprod_{k \in \bbN} X_k, \\
		\pi &: [x \in X_k] \mapsto (x, k) \in X \times \bbN, \text{ and} \\
		M &: [x \in X_k] \mapsto \left( \bigoplus \set{M_n(s)}{\pi_{n,\bbN}(s) = k \text{ and } f_k(\pi_{n,X}(s)) = x} \right)_n \in \frac{\prod_n}{\bigoplus_n} \calA_n.
	\end{align*}
	This way, we still have all modules $M_n(s)$, but we placed them in a locally finite way (cf. Property \ref{c_locally_compact} of Definition \ref{def:controlled_categories}).
	We define an isomorphism $(\varphi_n)_n : (S_n, \pi_n, M_n)_n \rightarrow \Phi(S,\pi,M)$ via
	$$ (\varphi_n)^s_{[x \in X_k]} : M_n(s) \rightarrow \bigoplus \set{M_n(t)}{\pi_{n,\bbN}(t) = k \text{ and } f_k(\pi_{n,X}(t)) = x},$$
	which is the inclusion if $M_n(s)$ is a direct summand of the right hand side, or 0 otherwise.
	We can do the same thing with projections in the other direction.
	This shows that $(\varphi_n)_n$ is indeed an isomorphism.
	Yet we still need to check the control condition.
	
	Assume $(\varphi_n)^s_{[x \in X_k]} \neq 0$.
	Then $\pi_{n,\bbN}(s) = k$ and $f_k(\pi_{n,X}(s)) = x$.
	The defining property of $f_k$ now ensures
	\begin{equation*}
		d(\pi_{n,X}(s), x) = d(\pi_{n,X}(s), f_k(\pi_{n,X}(s))) \leq \delta_k. \qedhere
	\end{equation*}
\end{proof}
	\section{Gromov Monsters}

Gromov monster groups are groups that weakly contain expanders.
These groups yield counter-examples for the Baum--Connes conjecture, see \cite[Theorem~8.2]{Willett.2012a}.
As Osajda points out in \cite[Section 1.3]{Osajda.2020}, a weak embedding is not necessarily a coarse embedding and that having a coarse embedding is crucial for some results, as in Willett and Yu's situation.
In the same article Osajda constructs finitely  generated groups with expanders isometrically embedded into their Cayley graphs (cf.\ \cite[Theorem 4]{Osajda.2020}).
The necessary ingredient is a sequence of expanders of large girth and bounded diameter-by-girth ratio.

The easiest way to create such a sequence is to consider the residually finite group $SL_2(\bbZ)$.
It is a well-known fact that this group contains a free group of rank two, generated by $A = \begin{psmallmatrix}
	1 & 2 \\ 0 & 1
\end{psmallmatrix}$ and $B = \begin{psmallmatrix}
1 & 0 \\ 2 & 1
\end{psmallmatrix}$.
Margulis showed in \cite{Margulis.1982} that the Cayley graphs $\Gamma_p := \Cay_{\{A,B\}}(SL_2(\bbF_p))$ have logarithmic girth.
The precise statement is that $0.756 \cdot \log(|\Gamma_p|)$ is an asymptotic lower estimate for $\girth(\Gamma_p)$, cf. \cite[Section 6]{Margulis.1982}.

Using the facts that the sequence $(\Gamma_p)_{p \text{ prime}}$ is an expander 
and that expanders are of logarithmic diameter, we conclude that the $\Gamma_p$ have a bounded diameter-by-girth ratio.
These considerations have been generalized in \cite{Arzhantseva.2018}.

As the Farrell--Jones conjecture is true for free groups, in particular for $\langle A, B \rangle$, our main theorem shows that the bounded isomorphism conjecture is true for $\bigsqcup_{p} \Gamma_p$ as well.
We conclude that not all large girth expanders and monster groups causing trouble in the setting of the Baum--Connes conjecture cause trouble in the Farrell--Jones setting.
In particular, there cannot be a direct analogue of Willett and Yu's non-surjectivity result for the coarse Farrell--Jones assembly map. 
	
	\bibliography{bibliography.bib}
	
\end{document}